\newsavebox{\@brx}
\newcommand{\llangle}[1][]{\savebox{\@brx}{\(\m@th{#1\langle}\)}%
  \mathopen{\copy\@brx\kern-0.5\wd\@brx\usebox{\@brx}}}
\newcommand{\rrangle}[1][]{\savebox{\@brx}{\(\m@th{#1\rangle}\)}%
  \mathclose{\copy\@brx\kern-0.5\wd\@brx\usebox{\@brx}}}
\newcommand{\si}{\sigma}
\newcommand{\Si}{\Sigma}
\newcommand{\Ga}{\Gamma}
\newcommand{\bC}{\mathbb{C}}
\newcommand{\bK}{\mathbb{K}}
\newcommand{\bL}{\mathbb{L}}
\newcommand{\bP}{\mathbb{P}}
\newcommand{\bQ}{\mathbb{Q}}
\newcommand{\bR}{\mathbb{R}}
\newcommand{\bZ}{\mathbb{Z}}
\newcommand{\btau}{\boldsymbol{\tau}}
\newcommand{\bSi}{{\boldsymbol \Si}}
\newcommand{\bq}{\mathbf{q}}
\newcommand{\cF}{\mathcal{F}}
\newcommand{\cL}{\mathcal{L}}
\newcommand{\cM}{\mathcal{M}}
\newcommand{\cR}{\mathcal{R}}
\newcommand{\cS}{\mathcal{S}}
\newcommand{\cX}{\mathcal{X}}
\newcommand{\Hom}{\mathrm{Hom}}
\newcommand{{\inv} }{\mathrm{inv}}
\newcommand{\ev}{\mathrm{ev}}
\newcommand{\Aut}{\mathrm{Aut}}
\newcommand{\val}{ {\mathrm{val}} }
\newcommand{\vir}{{\mathrm{vir}}}
\newcommand{\NE}{ {\mathrm{NE}}}
\newcommand{\bt}{\mathbf{t}}
\newcommand{\su}{\mathsf{u}}
\newcommand{\sv}{\mathsf{v}}
\newcommand{\sw}{\mathsf{w}}
\newcommand{\Mbar}{\overline{\cM}}
\newcommand{\CP}{\bP^1}
\newcommand{\RP}{\bR\bP^1}
\newcommand{\OGW}{(\CP,L),S^1}
\newcommand{\pOne}{{p_{\si_1}}}
\newcommand{\pZero}{{p_{\si_0}}}
\newcommand{\pTwo}{{p_{\si_2}}}
\newtheorem{lma}{Lemma}[section]
\newtheorem{defn}[lma]{Definition}
\newtheorem{prop}[lma]{Proposition}
\newtheorem{theorem}[lma]{Theorem}
\newtheorem{remark}[lma]{Remark}
\theoremstyle{definition}
\begin{document}
\allowdisplaybreaks
\title{Mirror symmetry and open/closed correspondence for the projective line}

\author{Jinghao Yu}
\address{Jinghao Yu, Department of Mathematical Sciences, Tsinghua University, Haidian District, Beijing 100084, China}
\email{yjh21@mails.tsinghua.edu.cn}

\author{Zhengyu Zong}
\address{Zhengyu Zong, Department of Mathematical Sciences,
	Tsinghua University, Haidian District, Beijing 100084, China}
\email{zyzong@mail.tsinghua.edu.cn}

\maketitle

\begin{abstract}
	We study the open/closed correspondence for the projective line via mirror symmetry. More explicitly, we establish a correspondence between the generating function of disk Gromov-Witten invariants of the complex projective line $\bP^1$ with boundary condition specified by an $S^1$-invariant Lagrangian sub-manifold $L$ and the asymptotic expansion of the $I$-function of a toric surface $\cS$.
\end{abstract}

\tableofcontents

\section{Introduction}

\subsection{Historical background and motivation}

\subsubsection{Open/closed correspondence for Calabi-Yau 3-folds}
Proposed by Mayr \cite{Mayr01} and Lerche-Mayr \cite{LM01}, the \emph{open/closed correspondence} predicts that the genus-zero topological amplitudes of an open string geometry on a
Calabi-Yau 3-fold with a prescribed Lagrangian boundary condition should coincide with those of a closed string geometry on a dual Calabi-Yau 4-fold. In mathematical language, the open/closed correspondence conjecturally relates the disk Gromov-Witten invariants of the open 3-fold geometry to the genus-zero closed Gromov-Witten invariants of the 4-fold geometry.

The open/closed correspondence for the case of a toric Calabi-Yau 3-fold $X$ with a Lagrangian submanifold $L$ of Aganagic-Vafa type is mathematically proved in \cite{LY21} by virtual localization techniques. The above result is generalized to the case of a toric Calabi-Yau 3-orbifold $\cX$ with a Lagrangian suborbifold $\cL$ of Aganagic-Vafa type in \cite{LY22}. In \cite{AL23}, the open/closed correspondence is also proved for the quintic threefold in terms of Gauged Linear Sigma Model. By the open/relative correspondence for toric Calabi-Yau 3-orbifolds in \cite{FLT12}, the open/closed correspondence for toric Calabi-Yau 3-orbifolds can also be viewed as the log-local correspondence \cite{vGGR19}. Related works can be found in e.g. \cite{BBvG20,BBvG20b}.

\subsubsection{Mirror symmetry and open/closed correspondence for the projective line}
In this paper, we prove the open/closed correspondence for the complex projective line $\bP^1$ via mirror symmetry, although $\bP^1$ is not Calabi-Yau. 

Let $t\in S^1$ act on $\CP$ by $t\cdot [z_1,z_2] = [tz_1,  t^{-1}z_2]$, where $[z_1, z_2]$ are the homogeneous coordinates of $\bP^1$. Let $L := \{[e^{{\rm i}\varphi},e^{-{\rm i}\varphi}]\in\CP: \varphi\in\bR\}$ be the Lagrangian submanifold of $\CP$, which is preserved by the $S^1$-action. By taking a M\"{o}bius transform, we can identify the pair $(\bP^1,L)$ with $(\bP^1,\bR\bP^1)$. In Section \ref{sec:open-gw-p1}, we will define and study the $S^1$-equivariant open Gromov-Witten theory of $(\bP^1,L)$. The open Gromov-Witten theory with descendants of $(\bP^1,\bR\bP^1)$ is studied in \cite{BNPT22}. Related works can be found in \cite{BT17,Net17,PST14,Tes23}.

On the other hand, we will define a toric surface $\cS$ in Section \ref{sec:geometryX} and study the equivariant closed Gromov-Witten theory of $\cS$ in Section \ref{sec:closed-gw-X}. We will consider the $J$-function $J_\cS(\btau,z)$, which encodes the genus zero Gromov-Witten invariants of $\cS$. By genus zero mirror theorem, the $J$-function $J_\cS(\btau,z)$ is identified to the $I$-function $I_\cS(\bq,z)$. The main result (Theorem \ref{thm:main-thm-i}) of this paper states that the generating function of the $S^1$-equivariant open Gromov-Witten invariants of $(\bP^1,L)$ can be identified to the coefficient of the $z^{-2}$-term in the asymptotic expansion of $I_\cS(\bq,z)$.

In \cite{Z25}, the second author studies the open/closed correspondence for $(\bP^1,L)$ via virtual localization computations. We would like to remark the following differences between the current paper and \cite{Z25}. In \cite{Z25}, the \emph{descendant} insertions are included in both open Gromov-Witten invariants of $(\bP^1,L)$ and closed Gromov-Witten invariants of $\cS$ while in the current paper we only consider primary insertions. On the other hand, the advantage of the current paper is that the main result (Theorem \ref{thm:main-thm-i}) takes a more elegant form. Besides, the study of open/closed correspondence in \cite{Z25} is at numerical level and is purely on A-model side. In the current paper, the correspondence is studied via mirror symmetry and is upgraded to the level of generating
functions. Therefore the correspondence further carries over to the B-model side, predicting that the B-model disk potential $W_{0,1}$ (studied in \cite{YZ25} via mirror curve) and the $I$-function $I_\cS$ match up.

\begin{figure}
    \begin{equation*}\label{fig:correspondence}
	    \xymatrix{
			F_{0,1}^{\OGW} \ar[r]^-{\textrm{mirror }} & W_{0,1}  \\
		    J_\cS \ar[r]^-{\textrm{mirror }} \ar[u]  &  I_\cS \ar[u]
	    }
    \end{equation*}
\caption{Interrelations among the mentioned topics}
\end{figure}

We hope the result in this paper can contribute to understanding of the open/closed correspondence for non-Calabi-Yau target spaces.

\subsubsection{Construction of toric Calabi-Yau manifold/orbifolds in open/closed correspondence}
From geometric point of view, the formalism of open/closed correspondence in this paper has a similar philosophy as in the case of toric Calabi-Yau 3-folds/3-orbifolds.

We sketch the geometric construction in the study of open/closed correspondence for toric Calabi-Yau 3-folds/3-orbifolds following \cite{LY21,LY22}.
For the open sector, \cite{LY21,LY22} consider the toric Calabi-Yau 3-fold $\cX$ defined by an extended stacky fan $\bSi=(N,\Si,\alpha)$ in the sense of Jiang \cite{J08}, 
where $N\cong \bZ^3$, $\Si$ is a finite simplicial fan in $\bR^3=\bZ^3\otimes \bR$ and $\alpha:\bZ^R\rightarrow N$. 
The corresponding dual toric Calabi-Yau 4-fold $\widetilde{\cX}$ is specified by the extended stacky fan $\widetilde{\bSi}=(\widetilde{N},\widetilde{\Si},\widetilde{\alpha})$,
where $\widetilde{N}=N\oplus \bZ v_4$, $\widetilde{\alpha}:\bZ^{R+2}\rightarrow \widetilde{N}$ is an extension of morphism $\alpha$.
The lattice map fits into the following commutative diagram:
\begin{equation}\label{eqn:ses}
\begin{tikzcd}
0 \arrow[r] & \bL \arrow[r] \arrow[d] & \bZ^R \arrow[r, "\alpha"] \arrow[d] & N \arrow[r] \arrow[d] & 0 \\
0 \arrow[r] & \widetilde{\bL} \arrow[r]    & \bZ^{R+2} \arrow[r, "\widetilde{\alpha}"]  & \widetilde{N} \arrow[r]     & 0,
\end{tikzcd}
\end{equation}
where $\bL:=\ker(\alpha)\cong \bZ^{R-3}$, $\widetilde{\bL}:= \ker(\widetilde{\alpha})\cong\bZ^{R-2}$. 

In Section \ref{sec:comparison}, we construct the toric surface $\cS$ via a similar philosophy. One can compare the two commutative diagrams \eqref{eqn:ses} and \eqref{eqn:diagram}. We hope that similar constructions can be applied to more general spaces, especially to non-Calabi-Yau manifold/orbifolds.

\subsubsection{Relation to open Gromov-Witten invariants via coherent boundary conditions}
We would like to briefly remark the relationship between the open Gromov-Witten invariants studied in this paper and the open Gromov-Witten invariants via coherent boundary conditions studied in \cite{BNPT22}. In Section \ref{sec:disk}, we define the open Gromov-Witten invariants of $(\bP^1,\bR\bP^1)$ via integration over the fixed locus of the $S^1$-action. In \cite{BNPT22}, the open Gromov-Witten invariants of $(\bP^1,\bR\bP^1)$ is defined via coherent boundary conditions and a graph sum formula is obtained. The vertex factor in this graph sum formula is given by the open Gromov-Witten invariants via integration over the fixed locus defined in Section \ref{sec:disk}, playing the role of the building block of the open Gromov-Witten invariants via coherent boundary conditions.

In the study of the open/closed correspondence for the case of toric Calabi-Yau 3-folds/3-orbifolds \cite{LY21,LY22}, the open Gromov-Witten invariants are also defined via integration over the fixed locus. The formalism of open/closed correspondence in our case has a similar philosophy and takes an elegant form (Theorem \ref{thm:main}). If we want to formulate the open/closed correspondence using open Gromov-Witten invariants via coherent boundary conditions, we can apply the graph sum formula to the closed Gromov-Witten invariants of $\cS$ and modify the statement of Theorem \ref{thm:main} accordingly.

\subsection{Statement of the main result}
Let $\bP^1$ be the complex projective line with homogeneous coordinates $[z_1, z_2]$. Consider the $S^1$ action on $\bP^1$ defined as
$$
 t\cdot [z_1,z_2] = [tz_1,  t^{-1}z_2],
$$
where $t\in S^1$. Let $\bC[\sv] = H^*_{S^1}(\text{point};\bC)$ be the $S^1$-equivariant cohomology of a point.
The $S^1$-equivariant cohomology of $\bP^1$ is given by
\[
H^*_{S^1}(\bP^1;\bC) = \bC[H,\sv]/\langle (H+\sv/2)(H-\sv/2)\rangle,
\]
where $\deg H=\deg \sv=2$.

Let 
$$   
L := \{[e^{{\rm i}\varphi},e^{-{\rm i}\varphi}]\in\CP: \varphi\in\bR\}
$$
be the Lagrangian submanifold of $\CP$, which is preserved by the $S^1$-action. By taking a M\"obius transform, we can identify the pair $(\CP,L)$ with $(\CP,\RP)$. We have $H_1(L)\cong\bZ$.

In Section \ref{sec:open-gw-p1}, we will study the disk Gromov-Witten invariants of $(\CP,L)$, which count holomorphic maps from the disk to $(\CP,L)$. We will consider the generating function $F_{0,1}^{\OGW}(\bt;X)$ of disk Gromov-Witten invariants of $(\CP,L)$, where $\bt = t^01+t^1H\in H^*_{S^1}(\bP^1;\bC)$ and $X$ is a formal variable encoding the winding number.

In Section \ref{sec:geometryX}, we will define a toric surface constructed as follows. Let $N=\bZ^2$ and define $v_1, v_2, v_3, v_4\in N$ as 
\[
v_1 = (0,1), \quad v_2 = (1,0), \quad v_3 = (-1,1),\quad v_4=(1,-1).
\]
Define 2-dimensional cones $\si_0,\si_1,\si_2\subset N_\bR$ as 
\[
\si_0 = \bR_{\geq 0}v_1 + \bR_{\geq 0}v_2, \quad \si_1 = \bR_{\geq 0}v_1 + \bR_{\geq 0}v_3, 
\quad \si_2 = \bR_{\geq 0}v_2 + \bR_{\geq 0}v_4.
\]
Let $\Si$ be the fan with top dimensional cones $\si_0,\si_1,\si_2$ and let $\cS$ be the toric surface defined by $\Si$ (see Figure \ref{fig:toric_surface}). The torus $T := N\otimes \bC^* \cong (\bC^*)^2$ acts on $\cS$ canonically.

In Section \ref{sec:closed-gw-X}, we will study the $T$-equivariant closed Gromov-Witten invariants of $\cS$. In particular, we will consider the $T$-equivariant $J$-function $J_\cS(\btau,z)$, which encodes the genus zero $T$-equivariant Gromov-Witten invariants of $\cS$. Here $\btau\in H^*_T(\cS)$ and $z$ is a formal variable encoding the descendant insertion (See Section \ref{sec:JX}). By genus zero mirror theorem, the $J$-function $J_\cS(\btau,z)$ is identified to the $I$-function $I_\cS(\bq,z)$, which is an explicit generalized hypergeometric series (See Section \ref{sec:IX}).

The following theorem is the main result of this paper:
\begin{theorem}[=Theorem \ref{thm:main-thm-i}]\label{thm:main}
	Under the relation $\log q_0 = t^0$, $q_1 = -\sqrt{q}X^{-1}$ and $q_2 = -\sqrt{q}X$,
	we have
	\[
	F_{0,1}^{\OGW} (\bt;X) = [z^{-2}]\big(I_\cS(\bq,z), \su_1\widetilde{\phi}_0\big)_{\cS,T}\Big|_{\su_2=-\su_1=\sv} + \text{Exc},
	\]
	where the $I$-function is in the asymptotic expansion as $\sv \rightarrow \infty$, and the exceptional term is 
	$\text{Exc} := -\sqrt{q}X^{-1} + \sqrt{q}X- \frac{(t^0)^2}{2\sv} - q\sv^{-1}$.
\end{theorem}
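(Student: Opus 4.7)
The plan is to compute both sides of the identity in explicit closed form and match them after the stated change of variables.

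For the left-hand side, I would apply $S^1$-equivariant virtual localization to $F_{0,1}^{\OGW}(\bt;X)$, as set up in Section \ref{sec:open-gw-p1}. The two $S^1$-fixed points $[1,0]$ and $[0,1]$ on $\bP^1$ lie on opposite sides of $L$, so the localization graphs consist of disk multi-covers over one or the other fixed point, with winding tracked by $X^{\pm 1}$, together with a contracted genus-zero sphere carrying the primary insertions $\bt = t^0 \cdot 1 + t^1 H$. Applying the divisor equation in $t^1 H$ and the string equation in $t^0 \cdot 1$ converts the generating function into an explicit series whose main part is a pair of hypergeometric-type sums in $-\sqrt{q}\,X^{\pm 1}$ with $\sv$-dependent coefficients, plus a few bubble and constant-map terms that will correspond to the exceptional piece.

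For the right-hand side, I would substitute the explicit hypergeometric presentation of $I_\cS(\bq,z)$ from Section \ref{sec:IX}, using the toric data $v_1,\dots,v_4$ and the fan $\Si$. Pairing with $\su_1 \widetilde{\phi}_0$, specializing $\su_2 = -\su_1 = \sv$, and extracting the coefficient of $z^{-2}$ in the $\sv \to \infty$ asymptotic expansion reduces the expression to a double hypergeometric series in $q_1, q_2$, with polynomial dependence on $\log q_0$ and rational dependence on $\sv$. Under the change of variables $\log q_0 = t^0$, $q_1 = -\sqrt{q}\,X^{-1}$, $q_2 = -\sqrt{q}\,X$, the bulk of this series takes the same shape as the bulk of the left-hand side, with the $\su_1$-prefactor providing the correct equivariant normalization to match the disk multi-cover weights.

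The main step is then the identity between the two hypergeometric sums. On the open side, the weights are products of linear factors in winding and $\sv$ arising from disk moduli; on the closed side, the weights are the hypergeometric $\Gamma$-ratios associated to the toric divisors of $\cS$. I would match coefficients in $X^{\pm n}$, $t^0$, and $t^1$ and reduce the claim to a combinatorial identity reflecting the common structure of disk moduli over $\bP^1$ and sphere moduli in $\cS$. The main obstacle will be isolating the finitely many low-degree terms on each side that fail to match the bulk pattern, and verifying that their difference is exactly $-\sqrt{q}X^{-1} + \sqrt{q}X - (t^0)^2/(2\sv) - q\sv^{-1}$; these boundary corrections reflect the non-Calabi-Yau nature of $\bP^1$ and the nontrivial action of $\su_1 \widetilde{\phi}_0$ on the low-degree strata of $\cS$, and must be bookkept carefully to close the argument.
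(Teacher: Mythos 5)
Your proposal is correct and follows essentially the same route as the paper: both sides are computed in closed form (the open side via localization reduced to one-point descendant invariants of $\bP^1$, equivalently the explicit equivariant $J$-function, and the closed side via the restriction of $I_\cS$ to $p_{\si_0}$ specialized at $\su_2=-\su_1=\sv$ and expanded in $\sv^{-1}$), and the low-degree terms are bookkept to produce $\text{Exc}$. The only practical difference is that the ``combinatorial identity'' you anticipate as the main step is vacuous in the paper's execution: after the change of variables both sides are literally the same modified Bessel series $\sum_{\mu\neq 0}e^{\mu t^0/\sv}\frac{\sv}{\mu^2}I_\mu(2\sqrt{q}\mu/\sv)X^\mu$, so the matching is term-by-term rather than via a nontrivial identity.
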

Another way to understand the right hand side of Theorem \ref{thm:main} is given in Section \ref{sec:formal expansion} from formal point of view.

\begin{figure}[!ht]
    \centering
    \begin{tikzcd}
        {F_{0,1}^{(\mathbb{P}^1,L),S^1}} \arrow[dd, dashed, <->] &  &  &                                            &   {(\mathbb{P}^1,L)} \text{\ open} \\
                                                            &  &  &                                            &                          \\
        {J_\cS({\btau},z)}  &  &  & {I_\cS(\bq,z)} \arrow[llluu, "\text{Theorem }\ref{thm:main-thm-i}"']\arrow[lll, "\text{mirror symmetry}"] &  \cS \text{\ closed}                \\
        \text{A-model}                                             &  &  & \text{B-model}                                    &  &                        
    \end{tikzcd}
    \caption{open/closed correspondence and mirror symmetry}
\end{figure}

\subsection{Overview of the paper}
In Section \ref{sec:geometric-setup}, we review the open geometry of $(\CP,L)$ and the closed geometry of the toric surface $\cS$.
In Section \ref{sec:open-gw-p1}, we review the open $S^1$-equivariant Gromov-Witten theory of $(\CP,L)$ and give an explicit formula for the disk potential.
In Section \ref{sec:closed-gw-X}, we study the equivariant closed Gromov-Witten theory of $\cS$. We will study the $J$-function of $\cS$ and identify it to the $I$-function by genus zero mirror theorem.
In Section \ref{sec:open-closed}, we study the correspondence between the disk potential of $(\CP,L)$ and the $I$-function of $\cS$, which is the main theorem of this paper.

\subsection*{Acknowledgements}
The authors would like to thank Song Yu for helpful explanations on the relationship between mirror symmetry and open/closed correspondence. The authors would also like to thank Bohan Fang and Chiu-Chu Melissa Liu for useful discussions. The second author is partially supported by the Natural Science Foundation of Beijing, China (grant No. 1252008) and NSFC (grant No. 12571067).

\section{Geometric setup}\label{sec:geometric-setup}

\subsection{Equivariant cohomology of $\CP$}
Let $t\in S^1$ act on $\CP$ by
\begin{equation*}\label{eq:S1-translation}
     t\cdot [z_1,z_2] = [tz_1,  t^{-1}z_2].
\end{equation*}
Let $\bC[\sv] = H^*_{S^1}(\text{point};\bC)$ be the $S^1$-equivariant cohomology of a point.
The $S^1$-equivariant cohomology of $\bP^1$ is given by
\[
    H^*_{S^1}(\bP^1;\bC) = \bC[H,\sv]/\langle (H+\sv/2)(H-\sv/2)\rangle.
\]
Let $p_1=[1,0]$ and $p_2=[0,1]$ be the $S^1$-fixed points. Then $H|_{p_1}=-\sv/2$, $H|_{p_2}=\sv/2$. 
The $S^1$-equivariant Poincaré dual of $p_1$ and $p_2$ are $H-\sv/2$ and $H+\sv/2$, respectively.

Let 
\[
    \phi_1 := -\frac{H-\sv/2}{\sv}, \phi_2:= \frac{H+\sv/2}{\sv} \in H^*_{S^1}(\bP^1;\bC)\otimes_{\bC[\sv]}\bC(\sv).
\]
We have
\[
    \phi_\alpha\cup\phi_\beta = \delta_{\alpha\beta}\phi_{\alpha}, \quad \alpha,\beta = 1,2.
\]

Let 
$$   
    L := \{[e^{{\rm i}\varphi},e^{-{\rm i}\varphi}]\in\CP: \varphi\in\bR\}
$$
be the Lagrangian submanifold of $\CP$, which is preserved by the $S^1$-action. By taking a M\"obius transform, we can identify the pair $(\CP,L)$ with $(\CP,\RP)$. Let $D_1$ and $D_2$ be the two disks with boundary $L$ centered at $p_1$ and $p_2$ respectively.
Then we have
\[
    H_2(\bP^1,L) = \bZ[D_1]\oplus \bZ[D_2].
\]
We identify the relative homology group $H_2(\bP^1,L)$ to $\bZ^2$,
where $\beta'=(d_-,d_+)\in \bZ^2$ is identified to $d_-[D_1]+d_+[D_2]$. 
Let $E(\CP,L) = \bZ^2_{\geq 0}$ be the set of effective curve classes of $H_2(\CP,L)$.

On the other hand, the homology group $H_2(\bP^1,L)$ satisfies the short exact sequence
    \begin{equation*}%\label{eqn:P1-L}
        \begin{tikzcd}
            0 \arrow[r] & H_2(\bP^1)= \bZ[\bP^1] \arrow[r, "\delta"] & H_2(\bP^1,L) \arrow[r, "\partial"] & H_1(L)= \bZ[\RP] \arrow[r] & 0.
        \end{tikzcd}
    \end{equation*}
For $d[\bP^1]\in H_2(\bP^1)$, the map $\delta:H_2(\bP^1)\rightarrow H_2(\bP^1,L)$ is defined by $\delta(d[\bP^1])=(d,d)$.
The connecting map $\partial:H_2(\bP^1,L)\rightarrow H_1(L)$ is given by $\partial(a,b) = (b-a)[\RP]$.

\subsection{The geometry of toric surface $\cS$}\label{sec:geometryX}
In this subsection, we construct a toric surface $\cS$ and study its geometry. We refer to \cite{CLS11,Fulton93} for the general notations of
toric varieties.

Let $N=\bZ^2$ and define $v_1, v_2, v_3, v_4\in N$ as 
\[
    v_1 = (0,1), \quad v_2 = (1,0), \quad v_3 = (-1,1),\quad v_4=(1,-1).
\]
Let $\tau_i = \bR_{\geq 0}v_i\subset N_{\bR} := N\otimes \bR, i = 1,2,3,4$ be the corresponding 1-dimensional cones.
Define 2-dimensional cones $\si_0,\si_1,\si_2\subset N_\bR$ as 
\[
    \si_0 = \bR_{\geq 0}v_1 + \bR_{\geq 0}v_2, \quad \si_1 = \bR_{\geq 0}v_1 + \bR_{\geq 0}v_3, 
    \quad \si_2 = \bR_{\geq 0}v_2 + \bR_{\geq 0}v_4.
\]
Let $\Si$ be the fan with top dimensional cones $\si_0,\si_1,\si_2$ and let $\cS$ be the toric surface defined by $\Si$ (see Figure \ref{fig:toric_surface}).

The torus $T := N\otimes \bC^* \cong (\bC^*)^2$ acts on $\cS$. Let $p_{\si_i} = V(\si_i), \ i=0,1,2$ be the $T$-fixed points and let
$l_{\tau_i} = V(\tau_i), \ i = 1,2,3,4$ be the $T$-invariant lines. 
Let $M:= \Hom(N,\bZ) = \Hom(T,\bC^*)$ be the character lattice of $T$. For $\tau_i\subset \si_j$, let
$\sw(\tau_i,\si_j)$ be the weight of the $T$-action on $T_{p_{\si_j}}l_{\tau_i}$, the tangent line to $l_{\tau_i}$ at the fixed point $p_{\si_j}$. The
weights $\sw(\tau_i,\si_j)$ are given by 
\begin{equation*}
    \begin{array}{rll}
        \sw(\tau_1,\si_1) = \su_1, & \sw(\tau_1,\si_0) = -\su_1, & \sw(\tau_2,\si_2) = \su_2,
        \\[1.1ex]
        \sw(\tau_3,\si_1) = -\su_1-\su_2, & \sw(\tau_2,\si_0)=-\su_2, & \sw(\tau_4,\si_2) = -\su_1-\su_2.
    \end{array}
\end{equation*}

\begin{figure}[h]
\center

\tikzset{every picture/.style={line width=0.65pt}} %set default line width to 0.75pt        

\begin{tikzpicture}[x=0.75pt,y=0.75pt,yscale=-1,xscale=1]
%uncomment if require: \path (0,326); %set diagram left start at 0, and has height of 326

%Straight Lines [id:da3694030533027516] 
\draw    (160,160) -- (208.59,208.59) ;
\draw [shift={(210,210)}, rotate = 225] [color={rgb, 255:red, 0; green, 0; blue, 0 }  ][line width=0.75]    (10.93,-3.29) .. controls (6.95,-1.4) and (3.31,-0.3) .. (0,0) .. controls (3.31,0.3) and (6.95,1.4) .. (10.93,3.29)   ;
%Straight Lines [id:da1291822452404594] 
\draw    (160,160) -- (208,160) ;
\draw [shift={(210,160)}, rotate = 180] [color={rgb, 255:red, 0; green, 0; blue, 0 }  ][line width=0.75]    (10.93,-3.29) .. controls (6.95,-1.4) and (3.31,-0.3) .. (0,0) .. controls (3.31,0.3) and (6.95,1.4) .. (10.93,3.29)   ;
%Straight Lines [id:da8384310037117084] 
\draw    (160,160) -- (160,112) ;
\draw [shift={(160,110)}, rotate = 90] [color={rgb, 255:red, 0; green, 0; blue, 0 }  ][line width=0.75]    (10.93,-3.29) .. controls (6.95,-1.4) and (3.31,-0.3) .. (0,0) .. controls (3.31,0.3) and (6.95,1.4) .. (10.93,3.29)   ;
%Straight Lines [id:da0977029027289894] 
\draw    (160,160) -- (111.41,111.41) ;
\draw [shift={(110,110)}, rotate = 45] [color={rgb, 255:red, 0; green, 0; blue, 0 }  ][line width=0.75]    (10.93,-3.29) .. controls (6.95,-1.4) and (3.31,-0.3) .. (0,0) .. controls (3.31,0.3) and (6.95,1.4) .. (10.93,3.29)   ;
%Straight Lines [id:da9843802660981886] 
\draw    (371.56,99.33) -- (328.78,139.91) ;
\draw [shift={(327.33,141.29)}, rotate = 316.51] [color={rgb, 255:red, 0; green, 0; blue, 0 }  ][line width=0.75]    (10.93,-3.29) .. controls (6.95,-1.4) and (3.31,-0.3) .. (0,0) .. controls (3.31,0.3) and (6.95,1.4) .. (10.93,3.29)   ;
%Straight Lines [id:da913798373067308] 
\draw    (371.56,99.33) -- (470,99.33) ;
%Straight Lines [id:da9774468413973649] 
\draw    (470,99.33) -- (470,183.24) ;
%Straight Lines [id:da15922838461767685] 
\draw    (470,183.24) -- (427.23,223.82) ;
\draw [shift={(425.78,225.2)}, rotate = 316.51] [color={rgb, 255:red, 0; green, 0; blue, 0 }  ][line width=0.75]    (10.93,-3.29) .. controls (6.95,-1.4) and (3.31,-0.3) .. (0,0) .. controls (3.31,0.3) and (6.95,1.4) .. (10.93,3.29)   ;
%Straight Lines [id:da5136861427585443] 
\draw    (371.56,99.33) -- (404.6,99.3) ;
\draw [shift={(406.6,99.3)}, rotate = 179.95] [color={rgb, 255:red, 0; green, 0; blue, 0 }  ][line width=0.75]    (10.93,-3.29) .. controls (6.95,-1.4) and (3.31,-0.3) .. (0,0) .. controls (3.31,0.3) and (6.95,1.4) .. (10.93,3.29)   ;
%Straight Lines [id:da2936049618196055] 
\draw    (470,99.33) -- (437,99.3) ;
\draw [shift={(435,99.3)}, rotate = 0.05] [color={rgb, 255:red, 0; green, 0; blue, 0 }  ][line width=0.75]    (10.93,-3.29) .. controls (6.95,-1.4) and (3.31,-0.3) .. (0,0) .. controls (3.31,0.3) and (6.95,1.4) .. (10.93,3.29)   ;
%Straight Lines [id:da47715145926211056] 
\draw    (470,99.33) -- (470.12,125.3) ;
\draw [shift={(470.13,127.3)}, rotate = 269.73] [color={rgb, 255:red, 0; green, 0; blue, 0 }  ][line width=0.75]    (10.93,-3.29) .. controls (6.95,-1.4) and (3.31,-0.3) .. (0,0) .. controls (3.31,0.3) and (6.95,1.4) .. (10.93,3.29)   ;
%Straight Lines [id:da9299959587062113] 
\draw    (470,183.24) -- (470,157.2) ;
\draw [shift={(470,155.2)}, rotate = 90] [color={rgb, 255:red, 0; green, 0; blue, 0 }  ][line width=0.75]    (10.93,-3.29) .. controls (6.95,-1.4) and (3.31,-0.3) .. (0,0) .. controls (3.31,0.3) and (6.95,1.4) .. (10.93,3.29)   ;

% Text Node
\draw (153,98.87) node [anchor=north west][inner sep=0.75pt]    {$v_{1}$};
% Text Node
\draw (215.33,154.87) node [anchor=north west][inner sep=0.75pt]    {$v_{2}$};
% Text Node
\draw (87.33,98.87) node [anchor=north west][inner sep=0.75pt]    {$v_{3}$};
% Text Node
\draw (215.33,202.2) node [anchor=north west][inner sep=0.75pt]    {$v_{4}$};
% Text Node
\draw (181.33,122.87) node [anchor=north west][inner sep=0.75pt]    {$\sigma_{0}$};
% Text Node
\draw (137,120.87) node [anchor=north west][inner sep=0.75pt]    {$\sigma_{1}$};
% Text Node
\draw (185.33,171.4) node [anchor=north west][inner sep=0.75pt]    {$\sigma_{2}$};
% Text Node
\draw (473.2,82.13) node [anchor=north west][inner sep=0.75pt]    {$p_{\sigma_{0}}$};
% Text Node
\draw (465.2,95.73) node [anchor=north west][inner sep=0.75pt]    {$\bullet$};
% Text Node
\draw (473.2,176.8) node [anchor=north west][inner sep=0.75pt]    {$p_{\sigma_{2}}$};
% Text Node
\draw (465.2,178.97) node [anchor=north west][inner sep=0.75pt]    {$\bullet $};
% Text Node
\draw (359.53,82.13) node [anchor=north west][inner sep=0.75pt]    {$p_{\sigma_{1}}$};
% Text Node
\draw (365.9,95.73) node [anchor=north west][inner sep=0.75pt]    {$\bullet$};
% Text Node
\draw (411.6,82.13) node [anchor=north west][inner sep=0.75pt]    {$l_{\tau_{1}}$};
% Text Node
\draw (473.2,134.8) node [anchor=north west][inner sep=0.75pt]    {$l_{\tau_{2}}$};
% Text Node
\draw (387.08,103.73) node [anchor=north west][inner sep=0.75pt]    {$\su_{1}$};
% Text Node
\draw (422.78,102.73) node [anchor=north west][inner sep=0.75pt]    {$-\su_{1}$};
% Text Node
\draw (440.53,114.73) node [anchor=north west][inner sep=0.75pt]    {$-\su_{2}$};
% Text Node
\draw (450.2,157.07) node [anchor=north west][inner sep=0.75pt]    {$\su_{2}$};
% Text Node
\draw (299.2,145.73) node [anchor=north west][inner sep=0.75pt]    {$-\su_{1}-\su_{2}$};
% Text Node
\draw (358.87,211.07) node [anchor=north west][inner sep=0.75pt]    {$-\su_{1}-\su_{2}$};
% Text Node
\draw (329.2,103.8) node [anchor=north west][inner sep=0.75pt]    {$l_{\tau _{3}}$};
% Text Node
\draw (448.53,203.8) node [anchor=north west][inner sep=0.75pt]    {$l_{\tau _{4}}$};

\end{tikzpicture}
\caption{The fan of $\Sigma$ and 1-skeleton of $\cS$}
\label{fig:toric_surface}
\end{figure}

Let
\[
    \widetilde{\phi}_1 := \frac{[p_{\si_1}]}{-\su_1-\su_2},\quad
    \widetilde{\phi}_2 := \frac{[p_{\si_2}]}{-\su_1-\su_2},\quad
    \widetilde{\phi}_0 := \frac{[p_{\si_0}]}{\su_1\su_2}.
\]
$\{\widetilde{\phi}_i: i= 0,1,2\}$ is a basis of $H^*_T(\cS;\bC)\otimes_{\bC[\su_1,\su_2]}\bC(\su_1,\su_2)$.
We have the homology group $H_2(\cS;\bZ) = \bZ l_{\tau_1}\oplus \bZ l_{\tau_2}$. So we make the identification $H_2(\cS;\bZ)\cong \bZ^2$, where 
$(d_1,d_2)\in\bZ^2$ is identified to $d_1l_{\tau_1}+d_2l_{\tau_2}$. Let $\NE(\cS)\subset H_2(\cS;\bR)$ be the Mori cone 
generated by effective curve classes in $\cS$, and $E(\cS) \cong \bZ^2_{\geq 0}$ denote the semigroup $\NE(\cS)\cap H_2(\cS;\bZ)$.

Consider the homomorphism
\[
    \phi: \tilde{N}:=\bigoplus_{i=1}^4 \bZ \tilde{v}_i\rightarrow N, \quad \tilde{v}_i\mapsto v_i.
\]
Let $\bL =\ker(\phi)\cong \bZ^2$, then we have a short exact sequence of abelian groups
\[
    0\rightarrow \mathbb{L}\xrightarrow{\psi} \mathbb{Z}^{4} \xrightarrow{\phi} \mathbb{Z}^2 \rightarrow 0.
\]
Let $e_1,e_2$ be the basis of $\bL$ such that under the basis of $\bL, \ \tilde{N}$ and $N$, we have
\begin{equation*}
    \phi = \left[
        \begin{array}{cccc}
            0 & 1 & -1 & 1
            \\
            1 & 0 & 1 & -1
        \end{array}
    \right],
\quad
    \psi = \left[
        \begin{array}{cc}
            -1 & 1
            \\
            1 & -1
            \\
            1 & 0
            \\
            0 & 1
        \end{array}
    \right].
\end{equation*}
Let $\{e_1^\vee,e_2^\vee\}$ be the dual $\bZ$-basis of $\bL^\vee$, and define $D_i\in\bL^\vee, i = 1,2,3,4$ as row vectors of $\psi$:
\[
    D_1= (-1,1), \quad D_2 = (1,-1), \quad D_3 = (1,0),\quad D_4=(0,1).
\]
There is a canonical identification $\bL^\vee\cong H^2(\cS;\bZ)$, where the divisor classes $D_i$ is identified to 
\[
     [V(\tau_i)] = [l_{\tau_i}] \in H^2(\cS;\bZ).
\]
The nef cone of $\cS$ is 
\[
    \text{Nef}(\cS) = \sum_{i=3,4}\bR_{\geq 0} D_i.  
\]

Let $H_1^T, H_2^T\in H^2_T(\cS)$ be the $T$-equivariant lift of Poincaré dual of $l_{\tau_3}, l_{\tau_4}$
satisfying:
\begin{equation*}
    \begin{array}{lll}
        H_1^T|_{\pOne} = \su_1, &H_1^T|_{\pZero}=0,&H_1^T|_\pTwo = 0,
        \\[1.2ex]
        H_2^T|_\pOne = 0, &  H_2^T|_{\pZero} = 0,& H_2^T|_\pTwo = \su_2.
    \end{array}
\end{equation*}
We define the $T$-equivariant divisor classes $D_i^T:= [V(v_i)]\in H^2_T(\cS)$
\begin{equation*}
    \begin{aligned}
        D_1^T &:= -H_1^T + H_2^T - \su_2,
        \\
        D_2^T &:= H_1^T - H_2^T -\su_1,
        \\
        D_3^T &:= H_1^T ,
        \\
        D_4^T &:= H_2^T.  
    \end{aligned}
\end{equation*}
We have
\begin{equation*}
    \begin{array}{lll}
        D_1^T|_\pOne = -\su_1-\su_2,& D_1^T|_\pZero = -\su_2, & D_1^T|_\pTwo = 0,
        \\[1.2ex]
        D_2^T|_\pOne = 0,& D_2^T|_\pZero = -\su_1, & D_2^T|_\pTwo = -\su_1-\su_2,
        \\[1.2ex]
        D_3^T|_{\pOne} = \su_1, & D_3^T|_{\pZero}=0,& D_3^T|_\pTwo = 0,
        \\[1.2ex]
        D_4^T|_\pOne = 0, &  D_4^T|_{\pZero} = 0, & D_4^T|_\pTwo = \su_2.
    \end{array}
\end{equation*}

Under the identification $e_1\mapsto l_{\tau_1}, \ e_2\mapsto l_{\tau_2}$,
the effective curve class $E(\cS) = \{\beta \in \bL: \beta = d_1e_1+d_2e_2,\ d_1,d_2\geq 0\}$.

\subsection{Comparison of geometry}\label{sec:comparison}
The projective line $\bP^1$ is constructed from a fan in $\bZ$. Let us consider the homomorphism 
\[
\alpha: \bZ \tilde{b}_1 \oplus \bZ \tilde{b}_2 \to \bZ b
\]
defined by $\alpha(\tilde{b}_1) = -b$ and $\alpha(\tilde{b}_2) = b$. This homomorphism is surjective and fits into the following short exact sequence of lattices:
\[
0 \to \bK \to \mathbb{Z}^2 \xrightarrow{\alpha} \mathbb{Z} \to 0,
\]
where $\bK := \ker(\alpha) = \bZ\epsilon$. 

The short exact sequences of lattices associated with $\bP^1$ and $\mathcal{S}$ fit into the commutative diagram below:
\begin{equation}\label{eqn:diagram}
\begin{tikzcd}
0 \arrow[r] & \bK \arrow[r] \arrow[d, "\kappa"] & \bZ^2 \arrow[r, "\alpha"] \arrow[d, "\hat{\nu}"] & \bZ \arrow[r] \arrow[d, "\nu"] & 0 \\
0 \arrow[r] & \bL \arrow[r, "\psi"]             & \bZ^4 \arrow[r, "\phi"]                    & \bZ^2 \arrow[r]                   & 0
\end{tikzcd}
\end{equation}
with the mappings specified by $\kappa(\epsilon) = e_1 + e_2$, $\nu(b) = v_4$, and $\hat{\nu}(\tilde{b}_i) = \tilde{v}_{i+2}$ for $i = 1,2$.

Take the dual map $\Hom(-,\bZ)$ to the commutative diagram \eqref{eqn:diagram}, we get 
\begin{equation}\label{eqn:pullback}
    \iota^*: H^2_T(\cS;\bZ)\rightarrow H^2_{S^1}(\bP^1;\bZ),
\end{equation}
where $\iota^*(D_1^T) = -\sv$, $\iota^*(D_2^T) = \sv$, $\iota^*(D_3^T) = \iota^*(D_4^T) = H$.

\begin{remark}
  The geometric construction of $\mathcal{S}$ is motivated by the goal of associating the Lagrangian submanifold $L \subset \CP$ with a new cone $\sigma_0$ in $\Sigma$. 
This aligns with the construction philosophy in \cite{LY21,LY22}, where the Lagrangian submanifold $\mathcal{L}$ is associated with a 4-dimensional cone $\widetilde{\sigma}_0$ defined by the set $\{\widetilde{b}_i: i=2,3,R+1,R+2\}$.
\end{remark}

% The inclusion from the fan of $\CP$ to $\Si$ (the fan of $\cS$) induces an inclusion $\iota: \CP\rightarrow \cS$.
% The homomorphism $\iota_*: H_2(\CP,L;\bZ)\rightarrow H_2(\cS;\bZ)$ is given by identifying $(d_-,d_+)$ and $(d_1,d_2)$.

\section{Gromov-Witten theory of $\bP^1$}\label{sec:open-gw-p1}
\subsection{Equivariant Gromov-Witten invariants of $\CP$}
Let $E(\bP^1)$ denote the set of effective curve classes in $H_2(\bP^1;\bZ)$.
Given a nonnegative integer $n$ and an effective curve class $\beta\in E(\bP^1)$,
let $\overline{\cM}_{0,n}(\bP^1,\beta)$ be the moduli stack of genus-0, $n$-pointed, degree-$\beta$ stable maps to $\bP^1$.
Let $\ev_i: \overline{\cM}_{0,n}(\bP^1,\beta)\rightarrow \bP^1$ be the evaluation map at the $i$-th marked point. The $S^1$-action on $\bP^1$ induces an $S^1$-action 
on $\overline{\cM}_{0,n}(\bP^1,\beta)$ and the evaluation map $\ev_i$ is $S^1$-equivariant.

For $i=1,\dots,n$, let $\bL_i$ be the $i$-th tautological line bundle over $\overline{\cM}_{0,n}(\bP^1,\beta)$ formed by the cotangent line at the $i$-th marked point.
Define the $i$-th descendant class $\psi_i$ as 
$$
    \psi_i := c_1(\bL_i)\in H^2(\overline{\cM}_{0,n}(\bP^1,\beta);\bQ).
$$
We choose an $S^1$-equivariant lift $\psi_i^{S^1}\in H^2_{S^1}(\overline{\cM}_{0,n}(\bP^1,\beta);\bQ)$ of $\psi_i$.

Given $\gamma_1,\dots,\gamma_n\in H^*_{S^1}(\bP^1;\bC)$ and nonnegative integers $a_1,\dots, a_n$, we define genus-$0$, degree-$\beta$, $S^1$-equivariant descendant 
Gromov-Witten invariants of $\bP^1$:
\[
    \langle \tau_{a_1}(\gamma_1)\dots\tau_{a_n}(\gamma_n)\rangle_{0,n,\beta}^{\bP^1,S^1} := \int_{[\overline{\cM}_{0,n}(\bP^1,\beta)^{S^1}]^{\vir}}
    \frac{\iota^*(\prod_{i=1}^n \ev_i^*(\gamma_i)(\psi_i^{S^1})^{a_i})}{e_{S^1}(N^\vir)}\in \bC(\sv),
\]
where $\overline{\cM}_{0,n}(\bP^1,\beta)^{S^1}$ is the $S^1$-fixed locus, $e_{S^1}(N^\vir)$ is the $S^1$-equivariant Euler class of the virtual normal bundle of $\overline{\cM}_{0,n}(\bP^1,\beta)$,
and $\iota:\overline{\cM}_{0,n}(\bP^1,\beta)^{S^1} \hookrightarrow \overline{\cM}_{0,n}(\bP^1,\beta)$ is the inclusion map.
The genus-$0$, degree-$\beta$, $S^1$-equivariant primary 
Gromov-Witten invariants of $\bP^1$ is defined as 
\[
\langle \gamma_1\dots\gamma_n\rangle_{0,n,\beta}^{\bP^1,S^1} := \langle \tau_{0}(\gamma_1)\dots\tau_{0}(\gamma_n)\rangle_{0,n,\beta}^{\bP^1,S^1}.
\]

Let $\bt = t^0 1+ t^1H$, we define the following double correlator:
\[
    \llangle \tau_{a_1}(\gamma_1),\dots,\tau_{a_n}(\gamma_n)\rrangle_{0,n}^{\bP^1,S^1} := 
    \sum_{\beta\in E(\bP^1)}\sum_{m=0}^\infty \frac{1}{m!}\langle \tau_{a_1}(\gamma_1),\dots,\tau_{a_n}(\gamma_n)
    ,\bt^m\rangle_{0,n+m,\beta}^{\bP^1,S^1}.
\]

For $j=1,\dots,n$, introduce formal variables
\[
    {\bf{u}}_j = {\bf{u}}_j(z) = \sum_{a\geq 0}(u_j)_az^a
\]
where $(u_j)_a\in H^*_{S^1}(\bP^1)\otimes_{\bC[\sv]}\bC(\sv)$. Define
\[
\llangle {\bf u}_1,\dots, {\bf u}_n \rrangle_{0,n}^{\bP^1,S^1} =  \llangle {\bf u}_1(\psi),\dots, {\bf u}_n(\psi)\rrangle_{0,n}^{\bP^1,{S^1}}
        = \sum_{a_1,\dots,a_n\geq 0}\llangle (u_1)_{a_1}\psi^{a_1},\dots, (u_n)_{a_n}\psi^{a_n}\rrangle_{0,n}^{\bP^1,S^1}.
\]

Let $z_1,\dots,z_n$ be formal variables and $\gamma_1,\dots,\gamma_n\in H^*_{S^1}(\bP^1)\otimes_{\bC[\sv]}\bC(\sv)$. Define
\[
    \llangle \frac{\gamma_1}{z_1-\psi},\dots,\frac{\gamma_n}{z_n-\psi}\rrangle_{0,n}^{\bP^1,S^1} = 
    \sum_{a_1,\dots,a_n\in\bZ_{\geq 0}}\llangle\gamma_1\psi^{a_1},\dots,\gamma_n\psi^{a_n}\rrangle_{0,n}^{\bP^1,S^1}\prod_{i=1}^{n}z_i^{-a_i-1}.
\]
We use the conventions that
\[
    \langle\frac{\gamma}{z-\psi}\rangle_{0,1,0}^{\bP^1,S^1} := z\int_{\bP^1}\gamma,
\]
\[
    \langle\frac{\gamma_1}{z-\psi},\gamma_2\rangle_{0,2,0}^{\bP^1,S^1} := \int_{\bP^1}\gamma_1\cup\gamma_2,
\]
\[
    \langle\frac{\gamma_1}{z_1-\psi_1},\frac{\gamma_2}{z_2-\psi_2}\rangle_{0,2,0}^{\bP^1,S^1} := \frac{1}{z_1+z_2}\int_{\bP^1}\gamma_1\cup\gamma_2.
\]

\subsection{$S^1$-fixed locus and decorated graphs}
The components of the $S^1$-fixed locus of the moduli space $\overline{\cM}_{0,n}(\CP,\beta)$ can be described by the decorated graphs 
introduced in \cite[Definition 52]{Liu13}, defined as follows.

\begin{defn}[Decorated graphs]\rm
    Define $G_{0,n}(\CP,\beta)$ to be the set of all decorated graphs $\vec{\Ga}=(\Ga,\vec{f},\vec{d},\vec{s})$ defined as follows. 
    Let $n\in\bZ_{\geq 0}$ and $\beta=d[\bP^1]\in E(\CP)$. A genus-$0$, $n$-pointed,
    degree $\beta$ decorated graph for $\CP$ is a tuple $\vec{\Ga}=(\Ga,\vec{f},\vec{d},\vec{s})$ consisting of 
    the following data.
    \begin{enumerate}
        \item $\Gamma$ is a compact, connected 1-dimensional CW complex. Let $V(\Ga)$ denote
        the set of vertices in $\Ga$. 
        Let $E(\Ga)$ denote the set of edges, where an edge $e$ is a line connecting two vertices.
        Let $F(\Ga)$ be the set of flags:
        \[
            \{(e,v)\in E(\Ga)\times V(\Ga):v\in e\}.
        \]
        For each $v\in V(\Ga)$, let $E_v$ denote the edges attached to $v$, and let $\val(v)=|E_v|$ denote the number of edges incident to $v$.
        \item The \emph{label map} $\vec{f}: V(\Ga)\rightarrow \{1,2\}$ labels each vertex with a number.
            If $v_1,v_2\in V(\Ga)$ are connected by an edge, we require
            $\vec{f}(v_1)\neq \vec{f}(v_2)$.
        \item The \emph{degree map} $\vec{d}:E(\Ga)\rightarrow \bZ_{>0}$ sends an edge $e$ to a positive integer $\vec{d}(e)=d_e$.
        \item The \emph{marking map} $\vec{s}:\{1,2,\dots,n\}\rightarrow V(\Ga)$. For each $v\in V(\Ga)$, define $S_v := \vec{s}^{-1}(v)$, and $n_v=|S_v|$.
    \end{enumerate}
    The data is required to satisfy the following conditions:
    \begin{itemize}
        \item [(i)] The graph $\Ga = (V(\Ga),E(\Ga))$ is a tree:
        \[
            |E(\Ga)| - |V(\Ga)| + 1 = 0.
        \]
        \item [(ii)] (degree) $d = \sum_{e\in E(\Ga)}d_e$.
    \end{itemize}
\end{defn}

Given $\vec{\Ga}\in G_{0,n}(\CP,\beta)$, we introduce the following notations:
\begin{itemize}
    \item (weight) We define
    \[
        {\bf w}(p_1) = -\sv, \quad {\bf w}(p_2) = \sv,
    \]
    For a flag $f=(e,v)\in F_v$, we define
    \begin{equation*}
        {\bf w}_f := \frac{{\bf w}(p_{\vec{f}(v)})}{d_e}.
    \end{equation*}
    \item (edge contribution) For each edge $e\in E(\Ga)$ and $d\in \bZ_{>0}$, we define
        \[
            {\bf h}(e,d) = \frac{(-1)^dd^{2d}}{(d!)^2\sv^{2d}}.
        \]
\end{itemize}

By \cite[Theorem 73]{Liu13}, we get
\begin{prop}\label{prop:localization-liu} Let $\beta=d[\CP]\in E(\CP)$. Then for $\gamma_1,\dots,\gamma_{n}\in H^*_{S^1}(\CP)$ and $a_1,\dots, a_n\in\bZ_{\geq 0}$, we have
    \begin{equation*}
    \begin{aligned}
        &\langle \tau_{a_1}(\gamma_1)\dots\tau_{a_{n}}(\gamma_{n})\rangle^{\CP,S^1}_{0,n,\beta} 
            \\
            = &\sum_{\vec{\Ga}\in G_{0,n}(\CP,\beta)}\frac{1}{|\Aut(\vec{\Ga})|}\prod_{e\in E(\Ga)}\frac{{\bf h}(e,d_e)}{d_e}
            \prod_{v\in V(\Ga)}\Big({\bf w}(p_{\vec{f}(v)})^{|E_v|-1}\prod_{i\in S_v}i^*_{p_{\vec{f}(v)}}\gamma_{i}\Big)
            \\
            &\cdot\prod_{v\in V(\Ga)}\int_{\overline{\cM}_{0,E_v\cup S_v}}\frac{\prod_{i\in S_v}\psi_i^{a_i}}{\prod_{e\in E_v}({\bf w}_{(e,v)}-\psi_{(e,v)})}.
    \end{aligned}
\end{equation*}
    We use the following convention for the unstable integrals:
\begin{equation*}
    \begin{aligned}
        \int_{\overline{\cM}_{0,1}}\frac{1}{{\bf w}-\psi} = {\bf w},
        \quad
        \int_{\overline{\cM}_{0,2}}\frac{\psi_2^a}{{\bf w}-\psi_1} = (-{\bf w})^a&, \quad a\in \bZ_{\geq 0},
        \\
        \int_{\overline{\cM}_{0,2}}\frac{1}{({\bf w}_1-\psi_1)({\bf w}_2-\psi_2)}= \frac{1}{{\bf w}_1+{\bf w}_2}&.
    \end{aligned}
\end{equation*}
\end{prop}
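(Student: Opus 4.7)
The plan is to apply the Graber--Pandharipande virtual localization formula to the moduli stack $\overline{\cM}_{0,n}(\CP,\beta)$ with respect to the induced $S^1$-action. Since the $S^1$-action on $\CP$ has only the isolated fixed points $p_1,p_2$ with tangent weights $\mathbf{w}(p_1)=-\sv$ and $\mathbf{w}(p_2)=\sv$, any $S^1$-invariant stable map must have a rigid structure: every irreducible component of the domain either contracts to one of $p_1,p_2$, or covers $\CP$ via a map totally ramified over $p_1,p_2$. This combinatorial structure is precisely encoded by the decorated graphs $\vec{\Ga}=(\Ga,\vec{f},\vec{d},\vec{s})\in G_{0,n}(\CP,\beta)$, where vertices correspond to contracted components (labeled by fixed points) and edges correspond to the degrees of the totally ramified covers.

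First, I would identify the fixed locus $\cF_{\vec{\Ga}}\subset \overline{\cM}_{0,n}(\CP,\beta)^{S^1}$ attached to $\vec{\Ga}$. Up to the finite group $\Aut(\vec{\Ga})$, it is a product $\prod_{v\in V(\Ga)}\overline{\cM}_{0,E_v\cup S_v}$ of moduli of contracted components, and the restrictions of the integrand are transparent: every marked point indexed by $i\in S_v$ maps to $p_{\vec{f}(v)}$, so $\ev_i^*(\gamma_i)$ restricts to the constant $i^*_{p_{\vec{f}(v)}}\gamma_i$, while $\psi_i^{a_i}$ restricts to the corresponding cotangent class on $\overline{\cM}_{0,E_v\cup S_v}$.

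Next, I would compute the equivariant Euler class of the virtual normal bundle of $\cF_{\vec{\Ga}}$ via the normalization exact sequence, which splits the contribution into edge and vertex pieces. For each edge of degree $d_e$, the $S^1$-weights of $H^0$ and $H^1$ of the pullback of $T\CP$ under the totally ramified cover, together with the automorphism factor $1/d_e$, produce the explicit edge weight $\mathbf{h}(e,d_e)=(-1)^{d_e}d_e^{2d_e}/((d_e!)^2\sv^{2d_e})$. At each vertex $v$, the normal bundle contributes a factor of $\mathbf{w}(p_{\vec{f}(v)})^{|E_v|-1}$ from the $S^1$-action on the star at $v$, together with $\prod_{e\in E_v}1/(\mathbf{w}_{(e,v)}-\psi_{(e,v)})$ encoding the smoothing of the node attached to each flag. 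Assembling these factors and dividing by $|\Aut(\vec{\Ga})|$ yields the stated formula.

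The main obstacle is the careful treatment of the unstable vertices with $|E_v|+n_v<3$, where $\overline{\cM}_{0,E_v\cup S_v}$ is either empty or a point and the vertex integral must be interpreted via the stated conventions so that it agrees with direct localization on the corresponding moduli of covers. These conventions are exactly those recorded in \cite[Theorem 73]{Liu13}, which I would follow; in principle they can be deduced by a limit argument from the stable case, but for the purposes of this proposition it suffices to cite Liu's formulation.
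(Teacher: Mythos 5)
Your proposal is correct and follows essentially the same route as the paper: the paper's entire proof is the specialization of the virtual localization formula in the form of \cite[Theorem 73]{Liu13} to $\bP^1$, which is exactly the computation you sketch (graph-indexed fixed loci, edge factor $\mathbf{h}(e,d_e)/d_e$ from the totally ramified covers, vertex factor $\mathbf{w}(p_{\vec f(v)})^{|E_v|-1}$ with node-smoothing terms, and Liu's conventions for the unstable vertices). Your extra detail on how the edge and vertex weights arise is consistent with what Liu's theorem packages, so citing it as you do suffices.
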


\subsection{Disk invariants}\label{sec:disk}
Given a nonnegative integer $n$ and an element $\beta'=(d_-,d_+)\in E(\CP,L)$, $d_-\neq d_+$. 
Let $D$ be the disk and $\partial D$ be its boundary. Let $(D,\partial D, x_1,\dots,x_n)$ be 
the disk with $n$ interior marked points. A degree-$\beta'$ disk map with $n$ interior points is a holomorphic map
$u: (D,\partial D, x_1,\dots,x_n)\rightarrow (\CP,L)$
satisfying $u_*([D]) = \beta'$ and $u(\partial D)\subset L$. 

Let $\overline{\cM}_{(0,1),n}(\CP, L,\beta')$ be the moduli space of degree-$\beta'$ with $n$
interior points. Let $\ev_i: \overline{\cM}_{(0,1),n}(\CP,L,\beta')\rightarrow \CP$ be the evaluation map at the $i$-th marked point.
The $S^1$-action on $(\CP,L)$ induces the $S^1$-action on $\overline{\cM}_{(0,1),n}(\CP,L,\beta')$. 
Let $\cF:=\overline{\cM}_{(0,1),n}(\CP,L,\beta')^{S^1}$ be the $S^1$-fixed locus and $\iota: \cF\rightarrow \overline{\cM}_{(0,1),n}(\CP,L,\beta')$ be the inclusion. 
The evaluation map $\ev_i$ is $S^1$-equivariant. 

For $i=1,\dots,n$, let $\bL_i$ be the $i$-th tautological line bundle over $\overline{\cM}_{(0,1),n}(\CP,L,\beta')$ formed by the cotangent line at the $i$-th
marked point. Define the $i$-th descendant class $\psi_i$ as 
\[
    \psi_i := c_1(\bL_i)\in H^2(\overline{\cM}_{(0,1),n}(\CP,L,\beta');\bQ).
\]
We choose an $S^1$-equivariant lift $\psi_i^{S^1}\in H^2_{S^1}(\overline{\cM}_{(0,1),n}(\CP, L,\beta');\bQ)$ of $\psi_i$.

Let $\gamma_1,\dots,\gamma_n\in H^*_{S^1}(\CP,\bC)$ and $a_1,\dots, a_n\in\bZ_{\geq 0}$. We define the degree-$\beta'$, $S^1$-equivariant 
open Gromov-Witten disk invariants of $(\CP,L)$
\begin{equation*}
    \langle\tau_{a_1}(\gamma_1)\dots\tau_{a_n}(\gamma_n)\rangle^{(\CP,L),S^1}_{(0,1),\beta'} 
    := \int_{[\cF]^\vir}\frac{\iota^*(\prod_{i=1}^{n}\ev_i^*(\gamma_i)(\psi_i^{S^1})^{a_i})}{e_{S^1}(N^\vir)} \in \bC(\sv),
\end{equation*}
where $[\cF]^\vir$ is the virtual fundamental class of $\cF$, and $e_{S^1}(N^\vir)$ is the $S^1$-equivariant
Euler class of the virtual normal bundle of $\cF$ in $\overline{\cM}_{(0,1),n}(\CP,L,\beta')$. Since $\cF$ is a compact orbifold without boundary, the above integral is well-defined.

\subsection{Localization formula of disk invariants}
The disk invariants can be computed by localization formula. We introduce the following notations.
\begin{itemize}
    \item (disk factor) For $\mu\in\bZ_{>0}$, we define the disk factors as 
    \[
        D^1(\mu) = (-1)^{\mu+1}\frac{\mu^{\mu-2}}{\mu!\sv^{\mu-2}}, \quad D^2(\mu) = \frac{\mu^{\mu-2}}{\mu!\sv^{\mu-2}}.
    \]
    For $\mu\in \bZ_{\neq 0}$, we define 
    \begin{equation*}
        D(\mu) = \left\{
        \begin{aligned}
            &D^1(-\mu), &\mu < 0;
            \\
            &D^2(\mu), &\mu>0.
        \end{aligned}
        \right.
    \end{equation*}
    \item (insertion) For $\mu\in \bZ_{\neq 0}$, we define 
    \begin{equation*}
        h(\mu) = \left\{
        \begin{aligned}
            &1, & \mu < 0;
            \\
            &2, & \mu> 0.
        \end{aligned}
        \right.
    \end{equation*}
    \item We consider the following decomposition: 
    \[
        G_{0,n+1}(\CP,\beta) = G^1_{0,n+1}(\CP,\beta)\sqcup G^2_{0,n+1}(\CP,\beta),
    \]
    where
    $
        G_{0,n+1}^i(\CP,\beta) = \{\vec{\Ga}\in G_{0,n+1}(\CP,\beta): \vec{f}\circ \vec{s}(n+1) = i\}, \ i=1,2.
    $
    \item 
    The indicator function $\delta_{v,n+1}$ is defined as 
    \begin{equation*}
        \delta_{v,n+1} := \left\{
        \begin{aligned}
            &1, \quad \text{if } v=\vec{s}(n+1),
            \\
            &0, \quad \text{otherwise}.
        \end{aligned}
        \right.
    \end{equation*}
        
\end{itemize}

By the virtual localization formula in \cite{BNPT22}, we get the following proposition.
\begin{prop}\label{prop:open-localization}
    Let $\beta'=(d_-,d_+)\in E(\CP,L)$ with $d_-\neq d_+$. Let $d=\min\{d_-,d_+\}$, $\beta=d[\CP]\in E(\CP)$ and $\mu = d_+-d_-$. Then for $\gamma_1,\dots,\gamma_n\in H_{S^1}^*(\CP)$ and $a_1,\dots,a_n\geq 0$, we have
    \begin{equation*}
        \begin{aligned}
            &\langle  \tau_{a_1}(\gamma_1)\dots\tau_{a_n}(\gamma_n)\rangle_{(0,1),\beta'}^{(\CP,L),S^1}
            \\
            = &\sum_{\vec{\Ga}\in G^{h(\mu)}_{0,n+1}(\CP,\beta)} \frac{1}{|\Aut(\vec{\Ga})|}\prod_{e\in E(\Ga)}\frac{{\bf h}(e,d_e)}{d_e}
            \prod_{v\in V(\Ga)}\Big({\bf w}(p_{\vec{f}(v)})^{|E_v|-1}\prod_{i\in S_v\backslash \{n+1\}}i^*_{p_{\vec{f}(v)}}\gamma_{i} \Big)
            \\
            &\cdot D(\mu)\Big(\frac{\mu}{\sv}\Big)\prod_{v\in V(\Ga)}\int_{\overline{\cM}_{0,E_v\cup S_v}}\frac{\prod_{i\in S_v\backslash \{n+1\}}\psi_i^{a_i}}{(\frac{\sv}{\mu}-\psi_{n+1})^{\delta_{v,n+1}}\prod_{e\in E_v}({\bf w}_{(e,v)}-\psi_{(e,v)})}.
        \end{aligned}
    \end{equation*}
\end{prop}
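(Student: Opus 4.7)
The plan is to apply the $S^1$-equivariant virtual localization theorem of \cite{BNPT22} to the moduli space $\overline{\cM}_{(0,1),n}(\CP,L,\beta')$, classify its $S^1$-fixed loci in terms of the decorated graphs in $G^{h(\mu)}_{0,n+1}(\CP,\beta)$, and then match the resulting graph sum term by term with the right hand side of the claim.

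First I would analyze the $S^1$-fixed stable disk maps. Since $L$ is preserved setwise (but not pointwise) by the $S^1$-action, and the only $S^1$-fixed points of $\CP$ are $p_1$ and $p_2$, the boundary circle of any fixed stable disk map must be contracted to one of these points. A fixed map therefore consists of a single disk component covering either $D_1$ or $D_2$ as a multi-cover, together with a closed nodal configuration glued at the center of the disk. The disk component has winding number $|\mu|$ with $\mu = d_+ - d_-$, and the sign of $\mu$ determines which disk is covered: $\mu<0$ forces a cover of $D_1$ (labelling the disk vertex by $1$), while $\mu>0$ forces a cover of $D_2$ (label $2$). The closed piece is an $S^1$-fixed stable map to $\CP$ of degree $\beta = d[\CP]$ with $d=\min\{d_-,d_+\}$, carrying an extra marked point at the attaching node. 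This identifies the connected components of $\cF$ with the decorated graphs in $G^{h(\mu)}_{0,n+1}(\CP,\beta)$, and the distinguished marked point $n+1$ records the attaching vertex.

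Next I would compute the contribution of each fixed graph to the integral $\int_{[\cF]^\vir} \iota^*\bigl(\prod_i \ev_i^*\gamma_i\,(\psi_i^{S^1})^{a_i}\bigr)/e_{S^1}(N^\vir)$. The contributions from closed vertices and edges coincide with those in Proposition \ref{prop:localization-liu}, since on these pieces the deformation/obstruction complex, the $\psi$-classes, and the evaluation maps restrict to their closed counterparts. The new ingredient is the disk contribution carried by the distinguished vertex $\vec{s}(n+1)$, which splits into an automorphism factor $1/|\mu|$ for the multi-cover, an equivariant Euler class of the deformation/obstruction complex on the disk component, and a smoothing-node factor $(\sv/\mu - \psi_{n+1})^{-1}$ at the attaching vertex (encoded by the exponent $\delta_{v,n+1}$). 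The open Hodge-type integral on the moduli of disks evaluates to $|\mu|^{|\mu|-2}/\bigl(|\mu|!\,\sv^{|\mu|-2}\bigr)$ up to a sign depending on $h(\mu)$, giving exactly $D(\mu)$; the remaining factor $\mu/\sv$ comes from identifying the tangent weight at the attaching fixed point, yielding the prefactor $D(\mu)(\mu/\sv)$.

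The main obstacle is pinning down the disk factor $D(\mu)$ with the correct sign, that is, distinguishing $D^1(\mu)$ from $D^2(\mu)$. This requires tracking the orientation of the moduli of bordered Riemann surfaces (equivalently, a relative pin structure on $L\cong \RP$ as in \cite{BNPT22}) and combining it with the equivariant Euler class of the obstruction bundle at $p_1$ versus $p_2$, whose tangent weights differ in sign. Once these sign conventions are fixed and the Katz--Liu-type Hodge integral identity is applied to the disk cover, the remaining steps — writing the integral over $[\cF]^\vir$ as a product of vertex integrals and summing over graphs — are a routine adaptation of \cite[Theorem 73]{Liu13}, producing the stated formula.
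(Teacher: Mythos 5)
Your proposal follows essentially the same route as the paper: the paper obtains this proposition by directly invoking the virtual localization formula of \cite{BNPT22}, organizing the fixed-locus contributions by the decorated graphs of \cite[Definition 52]{Liu13} exactly as you outline, with the closed vertices and edges contributing as in Proposition \ref{prop:localization-liu} and the disk component contributing the prefactor $D(\mu)\,(\mu/\sv)$ together with the node-smoothing factor $(\frac{\sv}{\mu}-\psi_{n+1})^{-1}$ at the vertex $\vec{s}(n+1)$; the sign conventions for $D^1$ versus $D^2$ that you single out as the main obstacle are likewise taken over from the conventions of \cite{BNPT22} rather than rederived.

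One correction to your description of the fixed locus: the boundary circle of an $S^1$-fixed disk map is \emph{not} contracted to $p_1$ or $p_2$ --- it is constrained to lie in $L$, which contains neither fixed point, and it winds $|\mu|\neq 0$ times around $L$, so it cannot be contracted at all. The correct argument is that $S^1$-invariance of the image forces the disk component to map onto the invariant disk $D_1$ (when $\mu<0$) or $D_2$ (when $\mu>0$) as the standard $|\mu|$-fold cover $z\mapsto z^{|\mu|}$, branched only over the center $p_{h(\mu)}$, where the closed configuration is attached. This gives exactly the conclusion you draw (disk vertex labelled $h(\mu)$, closed part of degree $d$ encoded by a graph in $G^{h(\mu)}_{0,n+1}(\CP,\beta)$ with the extra marked point $n+1$ at the attaching node), so the remainder of your argument is unaffected.
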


By Proposition \ref{prop:localization-liu} and Proposition \ref{prop:open-localization}, we get the following theorem:

\begin{theorem}\label{thm:open-descendant} Let $\beta'=(d_-,d_+)\in E(\CP,L)$ with $d_-\neq d_+$. Let $d=\min\{d_-,d_+\}$, $\beta=d[\CP]\in E(\CP)$ and $\mu = d_+-d_-$. Then for $\gamma_1,\dots,\gamma_n\in H_{S^1}^*(\CP)$ and $a_1,\dots,a_n\geq 0$, we have
    \begin{equation*}
        \begin{aligned}
            & \langle \tau_{a_1}(\gamma_1)\dots\tau_{a_n}(\gamma_n)\rangle^{(\CP,L),S^1}_{(0,1),\beta'}
            \\
            & \ \ \ \ \ = D(\mu)\cdot \int_{[\overline{\cM}_{0,n+1}(\bP^1,\beta)]^{\vir}}\frac{\ev_{n+1}^* \phi_{h(\mu)}\prod_{i=1}^{n}\psi_i^{a_i}\ev_i^*(\gamma_i)}
        {\frac{\sv}{\mu}(\frac{\sv}{\mu}-\psi_{n+1})}.
        \end{aligned}
\end{equation*}
\end{theorem}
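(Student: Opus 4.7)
The proof is a direct comparison of the two graph-sum expansions produced by Propositions \ref{prop:localization-liu} and \ref{prop:open-localization}. My plan is to expand the right-hand side via closed virtual localization, reduce the sum to graphs in $G^{h(\mu)}_{0,n+1}(\bP^1,\beta)$, resum a geometric series in $\psi_{n+1}$, and match the result with the left-hand side term by term, noting that the common overall factor $D(\mu)$ can be ignored throughout.

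The key observation is that $\phi_1,\phi_2$ form the equivariant idempotent basis at the fixed points: the given definitions together with $H|_{p_1}=-\sv/2,\ H|_{p_2}=\sv/2$ yield $i^*_{p_\alpha}\phi_\beta=\delta_{\alpha\beta}$. Hence when Proposition \ref{prop:localization-liu} is applied to the right-hand side — treating $n+1$ as an additional marked point with insertion $\phi_{h(\mu)}$ — the vertex weight factor $i^*_{p_{\vec{f}(v^*)}}\phi_{h(\mu)}$ at $v^* := \vec{s}(n+1)$ forces $\vec{f}(v^*)=h(\mu)$, so the surviving graphs are exactly those in $G^{h(\mu)}_{0,n+1}(\bP^1,\beta)$ and the extra insertion contributes $1$.

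To handle the rational factor, I would expand
\[
\frac{1}{(\sv/\mu)(\sv/\mu-\psi_{n+1})}=\sum_{k\geq 0}\Bigl(\frac{\mu}{\sv}\Bigr)^{k+2}\psi_{n+1}^{k},
\]
apply Proposition \ref{prop:localization-liu} to each $\psi_{n+1}^{k}$-term with $a_{n+1}=k$, and then resum. Since the $(n+1)$-st marked point attaches to $v^*$ in every contributing graph, $\psi_{n+1}$ pulls back to the cotangent class on the factor $\overline{\cM}_{0,E_{v^*}\cup S_{v^*}}$ only, so the resummation collapses to a single factor $\frac{\mu}{\sv}\cdot\frac{1}{\sv/\mu-\psi_{n+1}}$ placed inside the $v^*$-vertex integral. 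The prefactor $\mu/\sv$ matches the global $\mu/\sv$ appearing in Proposition \ref{prop:open-localization}, while $\frac{1}{\sv/\mu-\psi_{n+1}}$ matches the $(\sv/\mu-\psi_{n+1})^{-\delta_{v,n+1}}$ factor there.

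All remaining ingredients — the automorphism factor $|\Aut(\vec{\Ga})|^{-1}$, edge contributions $\mathbf{h}(e,d_e)/d_e$, Euler factors $\mathbf{w}(p_{\vec{f}(v)})^{|E_v|-1}$, pullback insertions $\prod_{i\in S_v\setminus\{n+1\}}i^*_{p_{\vec{f}(v)}}\gamma_i$, and the remaining vertex integrals — are produced identically by both propositions, so the two graph sums agree termwise and the theorem follows. The only delicate step is the legitimacy of the resummation, but $\psi_{n+1}$ is nilpotent on $\overline{\cM}_{0,n+1}(\bP^1,\beta)$ so only finitely many terms contribute in any fixed cohomological degree, making the exchange of sum and integral unproblematic.
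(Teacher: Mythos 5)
Your proposal is correct and follows essentially the same route as the paper, which derives Theorem \ref{thm:open-descendant} precisely by comparing the graph sums of Propositions \ref{prop:localization-liu} and \ref{prop:open-localization}: the idempotent property $i^*_{p_\alpha}\phi_\beta=\delta_{\alpha\beta}$ cuts the sum down to $G^{h(\mu)}_{0,n+1}(\CP,\beta)$, and resumming the geometric series in $\psi_{n+1}$ reproduces the factor $\frac{\mu}{\sv}\cdot\frac{1}{\sv/\mu-\psi_{n+1}}$ at the vertex $\vec{s}(n+1)$. The only minor caveat is that the resummation is best justified as a formal identity (consistent with the stated unstable-integral conventions, e.g. $\frac{1}{{\bf w}_1+{\bf w}_2}$) rather than by nilpotence of the equivariant lift of $\psi_{n+1}$, but this does not affect the argument.
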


\subsection{Equivariant $J$-function of $\CP$}\label{sec:J-function}
The $S^1$-equivariant $J$-function $J_{\CP}(z)$ is characterized by
\[
    J_{\CP}(z) = 1 + \sum_{\alpha\in\{1,2\}}\llangle 1,\frac{{\phi}_\alpha}{z-\psi}\rrangle_{0,2}^{\bP^1,S^1}{\phi}^\alpha,
\]
where $\{\phi^\alpha\}$ is the dual basis of $\{\phi_\alpha\}$ with respect to $S^1$-equivariant Poincaré pairing $(\cdot,\cdot)_{\CP,S^1}$.
By the genus zero mirror theorem \cite{G96,LLY97}, 
\[
    J_{\CP}(z) = e^{(t^0+t^1H)/z}\Bigg(
        1 + \sum_{d=1}^{\infty}\frac{q^d}{\prod_{m=1}^{d}(H+\sv/2+mz)\prod_{m=1}^{d}(H-\sv/2+mz)}
    \Bigg),
\]
where $q=e^{t^1}$.

Let $J_{\CP}(z) = J_{\CP}^1\phi_1+J_{\CP}^2\phi_2$. Then for $\alpha=1,2$, we have 
\begin{equation}\label{eq:J-function}
    \begin{aligned}
        J_{\CP}^\alpha &= e^{(t^0+ t^1\Delta^\alpha/2)/z}\sum_{d=0}^{\infty}
        \frac{q^d}{d!z^d}\frac{1}{\prod_{m=1}^{d}(\Delta^\alpha+mz)}
        \\
        &= e^{(t^0+ t^1\Delta^\alpha/2)/z}\sum_{m=0}^{\infty}\Big(\frac{\sqrt{q}}{z}\Big)^{2m}
        \frac{\Ga(\Delta^\alpha/z+1)}{m!\Ga(\Delta^\alpha/z+m+1)}
        \\
        &= e^{t^0/z}z^{\Delta^\alpha/z}\Ga(\Delta^\alpha/z+1)I_{\Delta^\alpha/z}\Big(\frac{2\sqrt{q}}{z}\Big),
    \end{aligned}
\end{equation}
where 
$$\Delta^1=-\sv,\quad \Delta^2=\sv,$$ and
the function $I_\alpha(x)$ is the \emph{modified Bessel function of first kind} in Appendix \ref{sec:Bessel}.

\subsection{The disk potential}\label{sec:disk-potential}
We introduce the following conventions for $\beta'\in E(\bP^1, L)$:
\begin{quote}
    Let $\beta'=(d_-,d_+)\in E(\CP,L)$, $d:=\min\{d_-,d_+\}$, $\beta:=d[\CP]\in E(\CP)$ and $\mu := d_+-d_-$.
\end{quote}
Let $\bt = t^01+t^1H$ and consider the following generating function of disk invariants of $(\CP,L)$:
\[
    F_{0,1}^{\OGW}(\bt;X) = \sum_{\beta'\in E(\CP,L)\atop {\mu \in \bZ_{\neq 0}}} \sum_{l\geq 0}
    \frac{1}{l!}\langle \bt^l\rangle_{(0,1),\beta'}^{\OGW} X^\mu.
\] 
By Theorem \ref{thm:open-descendant},
\begin{equation*}
    \begin{aligned}
        &  F_{0,1}^{\OGW}(\bt;X) =
        \\
        &\ \ \ \ \ = \sum_{\beta\in E(\bP^1)}\sum_{l\geq 0}\frac{1}{l!}\sum_{\mu\in\bZ_{\neq 0}}
        \langle\bt^l,\frac{\phi_{h(\mu)}}{\frac{\sv}{\mu}(\frac{\sv}{\mu}-\psi)}\rangle^{\bP^1,S^1}_{0,l+1,\beta}D(\mu)X^\mu
        \\
        &\ \ \ \ \ = \sum_{\mu\in\bZ_{\neq 0}}\Big(\frac{1}{\Delta^{h(\mu)}}+\llangle 1,\frac{\phi_{h(\mu)}}{\frac{\sv}{\mu}-\psi}\rrangle^{\CP,S^1}_{0,2}\Big)D(\mu)X^\mu
        \\
        &\ \ \ \ \ = \sum_{\mu>0}\Big(\left(J_{\CP}\right)_{1}(-\sv/\mu)D^1(\mu)X^{-\mu} + {(J_{\CP})}_2(\sv/\mu)D^2(\mu)X^\mu\Big),
    \end{aligned}
\end{equation*}
where $(J_{\CP})_\alpha(z):= ({J_{\CP}}(z),\phi_\alpha)_{\CP,S^1},\alpha=1,2$ are the components of the $J$-function in Section \ref{sec:J-function}.

By Equation (\ref{eq:J-function}), for $\mu>0$
\begin{equation*}
    \begin{aligned}
        J_{\CP}^1(-\sv/\mu)&= -\sv (J_{\CP})_1(-\sv/\mu)
        = e^{-\mu t^0/\sv}(-\sv/\mu)^\mu\Ga(\mu+1)I_\mu(-2\sqrt{q}\mu/\sv),
    \\
        J_{\CP}^2(\sv/\mu) &= \sv (J_{\CP})_2(\sv/\mu)
        = e^{\mu t^0/\sv}(\sv/\mu)^\mu\Ga(\mu+1)I_\mu(2\sqrt{q}\mu/\sv).
    \end{aligned}
\end{equation*}
We get
\[
    F_{0,1}^{\OGW}(\bt;X) = \sum_{\mu>0}e^{-\mu t^0/\sv}\frac{\sv}{\mu^2}I_\mu(-2\sqrt{q}\mu/\sv)X^{-\mu}
    + \sum_{\mu>0}e^{\mu t^0/\sv}\frac{\sv}{\mu^2}I_\mu(2\sqrt{q}\mu/\sv)X^\mu.
\]
Let $q, \sv$ be positive real numbers. By the symmetry of the modified Bessel function $I_\alpha(x)$ (see Appendix \ref{sec:Bessel}), we have
\begin{equation}\label{eqn:explicit-disk}
        F_{0,1}^{\OGW}(\bt;X) = \sum_{\mu\in\bZ_{\neq 0}}e^{\mu t^0/\sv}\frac{\sv}{\mu^2}I_\mu(2\sqrt{q}\mu/\sv)X^\mu.
\end{equation}

\section{Gromov-Witten theory of $\cS$}\label{sec:closed-gw-X}

\subsection{Equivariant Gromov-Witten invariants of $\cS$}
Given a nonnegative integer $n$ and an effective curve class $\beta\in E(\cS)$, let $\Mbar_{0,n}(\cS,\beta)$ be the moduli space of genus-0, $n$-pointed, degree-$\beta$ stable maps 
to $\cS$. Let $\ev_i:\Mbar_{0,n}(\cS,\beta)\rightarrow \cS$ be the evaluation map at the $i$-th marked point. The $T$-action on $\cS$ induces a $T$-action on the moduli space
$\Mbar_{0,n}(\cS,\beta)$ and the evaluation map $\ev_i$ is $T$-equivariant. Let $\Mbar_{0,n}(\cS,\beta)^T$ be the $T$-fixed locus of $\Mbar_{0,n}(\cS,\beta)$, and $
\iota:\Mbar_{0,n}(\cS,\beta)^T\rightarrow \Mbar_{0,n}(\cS,\beta)$ be the inclusion.

For $i=1,\dots,n$, let $\bL_i$ be the $i$-th tautological line bundle over $\Mbar_{0,n}(\cS,\beta)$ formed by the cotangent line at the $i$-th marked point. Define the $i$-th
descendant class $\psi_i$ as 
\[
    \psi_i := c_1(\bL_i)\in H^2(\Mbar_{0,n}(\cS,\beta);\bQ).
\]
We choose a $T$-equivariant lift $\psi_i^T\in H^2_T(\Mbar_{0,n}(\cS,\beta);\bQ)$ of $\psi_i$.

Let $\gamma_1,\dots,\gamma_n\in H^*_T(\cS;\bC)$ and $a_1,\dots,a_n\in\bZ_{\geq 0}$. We define the genus-0, $n$-pointed, degree-$\beta$, $T$-equivariant descendant 
Gromov-Witten invariant
\begin{equation*}
    \begin{aligned}
        & \langle \tau_{a_1}(\gamma_1)\dots\tau_{a_n}(\gamma_n)\rangle_{0,\beta}^{\cS,T}
        \\
        & := \int_{[\Mbar_{0,n}(\cS,\beta)^T]^{\vir,T}}\frac{\iota^*(\prod_{i=1}^{n}\ev_i^*(\gamma_i)(\psi_i^T)^{a_i})}{e_T(N^\vir)}\in\bC(\su_1,\su_2),
    \end{aligned}
\end{equation*}
where $[\Mbar_{0,n}(\cS,\beta)^T]^{\vir,T}$ is the virtual fundamental class, and $e_T(N^\vir)$ is the $T$-equivariant Euler class of the virtual normal bundle of $\Mbar_{0,n}(\cS,\beta)^T$ in $\Mbar_{0,n}(\cS,\beta)$.

\subsection{Equivariant $J$-function of $\cS$}\label{sec:JX}
Let $\btau =\btau_0 +\btau_2\in H^*_T(\cS)\otimes_{\bC[\su_1,\su_2]}\bC(\su_1,\su_2)$, where $\btau_0 = \tau_01\in H^0_T(\cS)$ and $\btau_2 = \tau_1H^T_1 + \tau_2H^T_2\in H^2_T(\cS)$. We define
\[
    \llangle \tau_{a_1}(\gamma_1),\dots,\tau_{a_n}(\gamma_n)\rrangle_{0,n}^{\cS,T} :=
    \sum_{\beta\in E(\cS)}\sum_{m=0}^\infty \frac{1}{m!}\langle\tau _{a_1}(\gamma_1),\dots,\tau_{a_n}(\gamma_n),\btau^m\rangle_{0,n+m,\beta}^{\cS,T}.
\]
Let $z_1,\dots,z_n$ be formal variables. We define
\[
    \llangle \frac{\gamma_1}{z_1-\psi},\dots,\frac{\gamma_n}{z_n-\psi}\rrangle_{0,n}^{\cS,T} = 
    \sum_{a_1,\dots,a_n\in\bZ_{\geq 0}}\llangle\gamma_1\psi^{a_1},\dots,\gamma_n\psi^{a_n}\rrangle_{0,n}^{\cS,T}\prod_{i=1}^{n}z_i^{-a_i-1}.
\]
Let $\{u_i\}_{i=1,2,3}$ be a basis of $H^*_T(\cS)\otimes_{\bC[\su_1,\su_2]}\bC(\su_1,\su_2)$. 
The $T$-equivariant $J$-function for $\cS$ is 
\[
    J_\cS(\btau,z) := 1 + \sum_{i=1}^3 \llangle 1,\frac{u_i}{z-\psi}\rrangle_{0,2}^{\cS,T}u^i,
\]
where $\{u^i\}$ is the dual basis of $\{u_i\}$ under the $T$-equivariant Poincaré pairing $(\cdot,\cdot)_{\cS,T}$.

\subsection{Equivariant $I$-function of $\cS$}\label{sec:IX}
\subsubsection{Genus zero mirror theorem}
Following \cite{G98, LLY99, LLY3}, the $T$-equivariant $I$-function of $\cS$ is defined as follows. Let 
\begin{equation*}
    \begin{aligned}
        &I_\cS(\bq,z) = e^{(\log q_0 + H_1^T\log q_1+H_2^T\log q_2)/z}\sum_{d_1,d_2\geq 0}q_1^{d_1}q_2^{d_2}
        \\
        & \ \ \ \ \ \cdot \frac{\prod_{m=-d_1+d_2}^{\infty}(D_1^T+(-d_1+d_2-m)z)}{\prod_{m=0}^{\infty}(D_1^T+(-d_1+d_2-m)z)}
        \cdot \frac{\prod_{m=d_1-d_2}^{\infty}(D_2^T+(d_1-d_2-m)z)}{\prod_{m=0}^{\infty}(D_2^T+(d_1-d_2-m)z)}
        \\[1.1ex]
        & \ \ \ \ \ \cdot \frac{\prod_{m=d_1}^{\infty}(D_3^T+(d_1-m)z)}{\prod_{m=0}^{\infty}(D_3^T+(d_1-m)z)}
        \cdot \frac{\prod_{m=d_2}^{\infty}(D_4^T+(d_2-m)z)}{\prod_{m=0}^{\infty}(D_4^T+(d_2-m)z)}.
    \end{aligned}
\end{equation*}
where $\bq=(q_0,q_1,q_2)$.

By \cite{G98, LLY99, LLY3}, we have the following genus zero mirror theorem.
\begin{theorem}\label{thm:genus-zero-mirror-theorem}
    Let $\tau_0(\bq) = \log q_0$, $\tau_1(\bq)= \log q_1$, $\tau_2(\bq)= \log q_2$. 
    Then we have
    \[
        e^{\frac{\tau_0(\bq)}{z}}J_\cS(\btau_2(\bq),z) = I_\cS(\bq,z),
    \]
    where the $I$-function is expanded in powers of $z^{-1}$:
    \[
        I_\cS(\bq,z) = 1 + z^{-1}(\log q_0 + \log q_1 H^T_1 + \log q_2 H^T_2) + o(z^{-1}).
    \]
\end{theorem}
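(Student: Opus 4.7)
The plan is to apply the $T$-equivariant toric mirror theorem of Givental \cite{G98} and Lian--Liu--Yau \cite{LLY99,LLY3}, in the form valid for smooth toric varieties with compact $T$-fixed loci (here the three isolated points $p_{\si_0},p_{\si_1},p_{\si_2}$). That theorem asserts that $I_\cS(\bq,z)$ lies on Givental's Lagrangian cone $\cL_\cS$ associated to the $T$-equivariant Gromov--Witten theory of $\cS$. A standard structural feature of $\cL_\cS$ is that any family of the form $1 + \tau(\bq)/z + O(z^{-2})$ lying on it with $\tau(\bq)\in H^{\leq 2}_T(\cS)$ must coincide with $J_\cS(\tau(\bq),z)$; it therefore suffices to compute the $z\to\infty$ asymptotic expansion of $I_\cS$ and to read off $\tau(\bq)$.

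The central computation is then to expand $I_\cS(\bq,z)$ in powers of $z^{-1}$. After factoring out the explicit prefactor $e^{(\log q_0 + H_1^T\log q_1 + H_2^T\log q_2)/z}$, one needs to show that the remaining hypergeometric double sum equals $1 + O(z^{-2})$. Using the intersection numbers $D_1\cdot\beta = -d_1+d_2$, $D_2\cdot\beta = d_1-d_2$, $D_3\cdot\beta = d_1$, $D_4\cdot\beta = d_2$ for $\beta = d_1 e_1 + d_2 e_2 \in E(\cS)$, one verifies summand-by-summand that every $(d_1,d_2)\neq (0,0)$ contributes a term of $z$-degree at most $-2$. The most delicate check is for the minimal nontrivial classes: for $\beta=(1,0)$ the summand reduces to $q_1 D_1^T/((D_2^T+z)(D_3^T+z))$, which is $O(z^{-2})$, and analogously for $\beta=(0,1)$; for larger $\beta$ the $z$-degree only drops further. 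The structural reason no $z^{-1}$-correction arises is that $\cS$ is weak Fano, $-K_\cS = D_3+D_4 \in \text{Nef}(\cS)$, which is the classical hypothesis ensuring that the Givental mirror map is trivial on the $H^{\leq 2}$-piece.

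Combining these two ingredients identifies $\tau(\bq) = \log q_0 + H_1^T\log q_1 + H_2^T\log q_2$ and yields $I_\cS(\bq,z) = J_\cS\bigl(\log q_0 + \btau_2(\bq),\, z\bigr)$. Splitting off the $H^0$-component $\log q_0\cdot 1$ as the exponential prefactor $e^{\log q_0/z}$ via the $T$-equivariant string equation then produces the stated identity $e^{\tau_0(\bq)/z}J_\cS(\btau_2(\bq),z) = I_\cS(\bq,z)$.

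\textbf{Main obstacle.} The conceptual content, namely the toric mirror theorem and the characterization of $\cL_\cS$, is used as a black box. The actual work lies in the asymptotic analysis of the hypergeometric sum, which is routine once the weak-Fano property $-K_\cS\in\text{Nef}(\cS)$ is verified. The one point requiring mild care is that $\cS$ is non-compact, so one must appeal to the equivariant version of the mirror theorem; this is standard since all virtual integrals are defined by $T$-equivariant localization against the three compact fixed points.
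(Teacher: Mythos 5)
Your proposal is correct and takes essentially the same route as the paper, which proves this theorem simply by invoking the equivariant toric mirror theorem of Givental and Lian--Liu--Yau and recording that the $z^{-1}$-expansion of $I_\cS$ has the stated form (trivial mirror map); your Lagrangian-cone formulation together with the summand-by-summand degree count is a fleshed-out version of exactly that citation. One small caveat: nefness of $-K_\cS$ by itself is not the reason no $z^{-1}$-correction appears (classes with $\langle -K_\cS,\beta\rangle=1$ do occur, e.g.\ $\beta=(1,0)$) — the decisive point is your explicit verification that every $\beta\neq 0$ summand has $z$-degree at most $-2$, thanks to the extra drop coming from the ray with $\langle D_i,\beta\rangle<0$, which you do carry out correctly.
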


\subsubsection{Analysis of $I$-function}
Let $(d_1,d_2)\in E(\cS)$, $d=\min\{d_1,d_2\}$ and $\mu = |d_1-d_2|\in\bZ_{\geq 0}$. We decompose the set $E(\cS)\cong \bZ^2_{\geq 0}$ into three subsets:
\begin{itemize}\setlength{\itemsep}{1.1ex}
    \item $E^1(\cS) = \{(d_1,d_2)\in \bZ^2_{\geq 0}: d_1 = d+\mu, \ d_2 = d \text{ for some } d\geq 0, \ \mu >0\};$
    \item $E^2(\cS) = \{(d_1,d_2)\in \bZ^2_{\geq 0}: d_1 = d, \ d_2 = d+\mu \text{ for some } d\geq 0, \ \mu >0\};$
    \item $E^3(\cS) = \{(d_1,d_2)\in \bZ^2_{\geq 0}: d_1 = d_2 = d \text{ for some } d\geq 0\}.$
\end{itemize}

Let $\iota_{\si_0}: \pZero \rightarrow \cS$ be the inclusion of $\pZero$ into the toric surface $\cS$. Consider the function
$$\iota^*_{\si_0}I_\cS(\bq,z) := I_\cS(\bq,z)|_\pZero.$$
According to the decomposition of the set $E(\cS)$, we have $\iota^*_{\si_0}I_\cS(\bq,z) = I^1 + I^2 + I^3$, where

\begin{align*}
        &I^1 = e^{(\log q_0)/z}\sum_{d\geq 0}
            \sum_{\mu > 0} \frac{q^{d+\mu}_1q_2^d}{d!(d+\mu)!z^{2d+\mu}}
            \\
            & \ \ \ \ \ \ \cdot 
            \frac{\prod_{m=-\mu}^{\infty}(-\su_2+(-\mu-m)z)}{\prod_{m=0}^{\infty}(-\su_2+(-\mu-m)z)}
            \frac{\prod_{m=\mu}^{\infty}(-\su_1+(\mu-m)z)}{\prod_{m=0}^{\infty}(-\su_1+(\mu-m)z)},
        \\
        &I^2 = e^{(\log q_0)/z}\sum_{d\geq 0}
            \sum_{\mu > 0} \frac{q_1^dq_2^{d+\mu}}{d!(d+\mu)!z^{2d+\mu}}
            \\
            & \ \ \ \ \ \ \cdot
            \frac{\prod_{m=\mu}^{\infty}(-\su_2+(\mu-m)z)}{\prod_{m=0}^{\infty}(-\su_2 + (\mu-m)z)}
            \frac{\prod_{m=-\mu}^{\infty}(-\su_1+(-\mu-m)z)}{\prod_{m=0}^{\infty}(-\su_1+(-\mu-m)z)},
        \\
        &I^3 = e^{(\log q_0)/z}\sum_{d\geq 0} \frac{q_1^dq_2^d}{(d!)^2z^{2d}}.
\end{align*}
Let $I^i(\bq;\sv,z) := I^i\big|_{\su_2=-\su_1=\sv}$, $i=1,2,3$. Then we have
% Restrict $I_i$, $i=1,2,3$ to $\su_1+\su_2=0$, $\su_1 = -\sv$, we have
\begin{align*}
           I^1(\bq;\sv,z) &= e^{(\log q_0)/z}\sum_{d\geq 0}
            \sum_{\mu > 0} \frac{q^{d+\mu}_1q_2^d}{d!(d+\mu)!z^{2d+\mu}}
            \frac{\prod_{m=-\mu}^{-1}(-\sv+(-\mu-m)z)}{\prod_{m=0}^{\mu-1}(\sv+(\mu-m)z)}
            \\
            &= e^{(\log q_0)/z}\sum_{d\geq 0}
            \sum_{\mu > 0} \frac{q^{d+\mu}_1q_2^d}{d!(d+\mu)!z^{2d+\mu}}
            \frac{(-1)^\mu\sv}{\sv+\mu z},
            \\
            I^2(\bq;\sv,z) &= e^{(\log q_0)/z}\sum_{d\geq 0}
            \sum_{\mu > 0} \frac{q_1^dq_2^{d+\mu}}{d!(d+\mu)!z^{2d+\mu}}
            \frac{\prod_{m=-\mu}^{-1}(\sv+(-\mu-m)z)}{\prod_{m=0}^{\mu-1}(-\sv + (\mu-m)z)}
            \\
            &= e^{(\log q_0)/z}\sum_{d\geq 0}
            \sum_{\mu > 0} \frac{q_1^dq_2^{d+\mu}}{d!(d+\mu)!z^{2d+\mu}}
            \frac{(-1)^\mu\sv}{\sv-\mu z},
            \\
            I^3(\bq;\sv,z) &= e^{(\log q_0)/z}\sum_{d\geq 0} \frac{q_1^dq_2^d}{(d!)^2z^{2d}}.
\end{align*}

In the following paragraphs, we view
$\sv$ as a formal variable and expand $I^i(\bq;\sv,z)$ in powers of $\sv^{-1}$ by the following equations:
\begin{equation}\label{eqn:expansion}
    \frac{\sv}{\sv+\mu z} = \sum_{k=0}^{\infty} (-1)^k\big(\frac{\mu}{\sv}\big)^kz^k,
    \quad
    \frac{\sv}{\sv-\mu z} = \sum_{k=0}^{\infty} \big(\frac{\mu}{\sv}\big)^kz^k.
\end{equation}

Let $[z^{-2}]I^i,\ i=1,2,3$ be the $z^{-2}$-coefficients of the above expansion of $I^i(q;\sv,z)$. We have
\begin{align}\label{eqn:I1}
        I^1(\bq;\sv,z) &= \sum_{l=0}^{\infty}
        \frac{(\log q_0)^l}{l!z^l}\sum_{d\geq 0,{\mu >0}}\frac{q_1^{d+\mu}q_2^d}{d!(d+\mu)!z^{2d+\mu}}(-1)^\mu\sum_{k=0}^{\infty}(-1)^k\big(\frac{\mu}{\sv}\big)^k z^k. \notag
        \\
        [z^{-2}]I^1(\bq;\sv,z) &= - q_1\sv + 
        \sum_{d\geq 0,{\mu > 0}}\sum_{l=0}^{\infty}\frac{(-\log q_0)^l}{l!}
        \frac{q_1^{d+\mu}q_2^d}{d!(d+\mu)!}\big(\frac{\mu}{\sv}\big)^{2d+l+\mu-2} 
        \\
        &= - q_1\sv + \sum_{d\geq 0,{\mu > 0}}e^{-(\mu\log q_0)/\sv}\frac{q_1^{d+\mu}q_2^d}{d!(d+\mu)!}\big(\frac{\mu}{\sv}\big)^{2d+\mu-2}, \notag
\end{align}
where $q_1\sv$ is from the exceptional term $(l,d,\mu,k)=(0,0,1,-1)$. Similarly, we have
\begin{align}\label{eqn:I2I3}   
        \quad \quad \quad I^2(\bq;\sv,z) &= \sum_{l=0}^{\infty}
    \frac{(\log q_0)^l}{l!z^l}\sum_{d\geq 0,\mu > 0}
    \frac{q_1^dq_2^{d+\mu}}{d!(d+\mu)!z^{2d+\mu}}(-1)^\mu\sum_{k=0}^{\infty} \big(\frac{\mu}{\sv}\big)^kz^k, \notag
    \\
        [z^{-2}]I^2(\bq;\sv,z) &= q_2\sv + \sum_{d\geq 0, {\mu>0}}\sum_{l=0}^{\infty}\frac{(\log q_0)^l}{l!}
         \frac{q_1^dq_2^{d+\mu}(-1)^\mu}{d!(d+\mu)!}\big(\frac{\mu}{\sv}\big)^{2d+l+\mu-2} 
        \\
        &= q_2\sv + \sum_{d\geq 0, {\mu>0}}e^{(\mu \log q_0)/\sv} \frac{q_1^dq_2^{d+\mu}(-1)^\mu}{d!(d+\mu)!}\big(\frac{\mu}{\sv}\big)^{2d+\mu-2}, \notag
        \\
        [z^{-2}]I^3(\bq;\sv,z) &= \frac{\log^2 q_0}{2} + q_1q_2. \notag
\end{align}
\begin{remark}
	We would like to give a remark on the expansion in Equation \eqref{eqn:expansion}. In Theorem \ref{thm:genus-zero-mirror-theorem}, $I_\cS$ is expanded as a power series of $z^{-1}$ in order to match $J_\cS$. On the other hand, in the expansion in Equation \eqref{eqn:expansion}, positive powers of $z$ appear. It turns out that the expansion in Equation \eqref{eqn:expansion} is the correct one in the open/closed duality (Theorem \ref{thm:main-thm-i}). This expansion can either be explained as the asymptotic expansion of $I^i$ as $\sv\rightarrow\infty$ (Appendix \ref{sec:asymptotic}) or be explained algebraically as formal expansion (Section \ref{sec:formal expansion}). 
 
\end{remark}

\section{Open/closed correspondence}\label{sec:open-closed}
\subsection{The open/closed correspondence}
In this section, we prove the open/closed correspondence by relating the $I$-function $I_\cS$ to the disk potential $F_{0,1}^{\OGW}$. We refer the readers to Appendix \ref{sec:asymptotic} for the details of asymptotic expansion of the $I$-function.
\begin{theorem}\label{thm:main-thm-i}
    Under the relation $\log q_0 = t^0$, $q_1 = -\sqrt{q}X^{-1}$ and $q_2 = -\sqrt{q}X$,
    we have
    \begin{equation}\label{eqn:main}
        F_{0,1}^{\OGW} (\bt;X) = [z^{-2}]\big(I_\cS(\bq,z), \su_1\widetilde{\phi}_0\big)_{\cS,T}\Big|_{\su_2=-\su_1=\sv} + \text{Exc},
   \end{equation}
    where the $I$-function is in the asymptotic expansion as $\sv \rightarrow \infty$, and the exceptional term is 
    $\text{Exc} := -\sqrt{q}X^{-1} + \sqrt{q}X- \frac{(t^0)^2}{2\sv} - q\sv^{-1}$.
\end{theorem}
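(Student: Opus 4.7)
The plan is to evaluate both sides of Equation \eqref{eqn:main} explicitly and compare them term by term. Since $\widetilde{\phi}_0 = [p_{\si_0}]/(\su_1\su_2)$, the equivariant Poincar\'e pairing with $\su_1\widetilde{\phi}_0$ collapses to localization at the fixed point $p_{\si_0}$, giving
\[
\bigl(I_\cS(\bq,z),\,\su_1\widetilde{\phi}_0\bigr)_{\cS,T} \;=\; \frac{1}{\su_2}\,\iota_{\si_0}^* I_\cS(\bq,z).
\]
After substituting $\su_2 = -\su_1 = \sv$ and invoking the decomposition $\iota_{\si_0}^*I_\cS = I^1+I^2+I^3$ from Section \ref{sec:IX}, the right-hand side of \eqref{eqn:main} reduces to $\frac{1}{\sv}[z^{-2}](I^1+I^2+I^3)(\bq;\sv,z) + \text{Exc}$. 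It therefore suffices to compare this quantity, after the substitutions $\log q_0 = t^0$, $q_1 = -\sqrt{q}X^{-1}$, $q_2 = -\sqrt{q}X$, with the Bessel-function formula for $F_{0,1}^{\OGW}$ from Equation \eqref{eqn:explicit-disk}.

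Next, I will plug in the closed formulas for $[z^{-2}]I^i$ from Equations \eqref{eqn:I1}--\eqref{eqn:I2I3} and apply the above substitutions. A direct computation shows
\[
q_1^{d+\mu}q_2^d = (-1)^\mu q^{d+\mu/2} X^{-\mu}, \qquad q_1^d q_2^{d+\mu} = (-1)^\mu q^{d+\mu/2} X^\mu.
\]
On the open side, I will expand the modified Bessel function as $I_\mu(x) = \sum_{d\geq 0}(x/2)^{2d+\mu}/(d!(d+\mu)!)$ and split the sum $\sum_{\mu \in \bZ_{\neq 0}}$ in \eqref{eqn:explicit-disk} into its $\mu>0$ and $\mu<0$ parts using the symmetries $I_{-n}(x) = I_n(x)$ and $I_n(-x) = (-1)^n I_n(x)$ from Appendix \ref{sec:Bessel}. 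A term-by-term comparison will then show that the $\mu>0$ part of $F_{0,1}^{\OGW}$ matches the honest double sum in $\frac{1}{\sv}[z^{-2}]I^2$ (i.e.\ everything except the $q_2\sv$ piece), while the $\mu<0$ part matches the honest double sum in $\frac{1}{\sv}[z^{-2}]I^1$ (everything except the $-q_1\sv$ piece). The leftover low-order pieces $-q_1 + q_2$ from $\frac{1}{\sv}[z^{-2}](I^1+I^2)$, together with $\frac{(t^0)^2}{2\sv} + \frac{q}{\sv}$ from $\frac{1}{\sv}[z^{-2}]I^3$, will combine under the substitutions to produce exactly $-\text{Exc}$, yielding the claimed identity after rearrangement.

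The computation is not conceptually difficult; the main obstacle is careful sign and exceptional-term bookkeeping. One must track the $(-1)^\mu$ factors arising from both $q_1,q_2 = -\sqrt{q}X^{\mp 1}$ and from $I_n(-x) = (-1)^n I_n(x)$, and verify that they align precisely in the pairwise matching of $\mu>0$ summands on the open side with $I^2$ terms, and of $\mu<0$ summands with $I^1$ terms. In addition, one must correctly isolate the low-order $(d,\mu,k)$ contributions that fail to correspond to honest disk counts and recognize them as the source of $\text{Exc}$. Finally, I will need to justify that the ``asymptotic expansion as $\sv\to\infty$'' used in the theorem statement agrees with the formal Laurent expansion in $\sv^{-1}$ given in Equation \eqref{eqn:expansion}, as explained in the Remark following \eqref{eqn:I2I3} and in Appendix \ref{sec:asymptotic}, so that the term-wise extraction of $[z^{-2}]$ coefficients above is legitimate.
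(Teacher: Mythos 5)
Your proposal is correct and follows essentially the same route as the paper's proof: substitute $\log q_0=t^0$, $q_1=-\sqrt{q}X^{-1}$, $q_2=-\sqrt{q}X$ into the explicit $[z^{-2}]$-coefficients of $I^1,I^2,I^3$ from \eqref{eqn:I1}--\eqref{eqn:I2I3}, recognize the resulting double sums as modified Bessel series matching \eqref{eqn:explicit-disk}, and absorb the low-order $(d,\mu,k)$ terms into $\text{Exc}$. The only additions are cosmetic: you make explicit the localization identity $\big(I_\cS,\su_1\widetilde{\phi}_0\big)_{\cS,T}=\frac{1}{\su_2}\iota_{\si_0}^*I_\cS$ and re-split the open side into $\mu>0$ and $\mu<0$ parts rather than using the Bessel symmetry already built into \eqref{eqn:explicit-disk}, both of which are consistent with the paper's argument.
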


\begin{proof}
Consider the change of variables:
\[
    \log q_0 \mapsto t^0,\quad q_1 \mapsto -\sqrt{q}X^{-1}, \quad q_2\mapsto -\sqrt{q}X.
\]
Then by \eqref{eqn:I1} \eqref{eqn:I2I3}, we have
\begin{align*}
        &[z^{-2}]I^1(\bq(\bt,X);\sv,z)
        \\
        & \ \ \ \ \ =
        \sqrt{q}X^{-1}\sv + \sum_{d\geq 0, \mu >0}e^{-\mu t^0/\sv}\frac{\sqrt{q}^{2d+\mu}(-X)^{-\mu}}{d!(d+\mu)!}
        \big(\frac{\mu}{\sv}\big)^{2d+\mu-2}
        \\
        & \ \ \ \ \ =
        \sqrt{q}X^{-1}\sv + \sv \sum_{\mu>0}e^{-\mu t^0/\sv}\frac{\sv}{\mu^2}I_{\mu}(-2\sqrt{q}\mu/\sv)X^{-\mu},
        \\
        & [z^{-2}]I^2(\bq(\bt,X);\sv,z) 
        \\
        &\ \ \ \ \ = -\sqrt{q}X\sv + \sum_{d\geq 0,\mu>0} e^{\mu t^0/\sv}\frac{\sqrt{q}^{2d+\mu} X^\mu}{d!(d+\mu)!}\big(\frac{\mu}{\sv}\big)^{2d+\mu-2}
        \\
        &\ \ \ \ \ = -\sqrt{q}X\sv + \sv\sum_{\mu>0} e^{\mu t^0/\sv}\frac{\sv}{\mu^2} I_\mu(2\sqrt{q}\mu/\sv)X^\mu,
        \\
        &[z^{-2}]I^3(\bq(\bt,X);\sv,z) = \frac{(t^0)^2}{2} + q.
\end{align*}
By the explicit formula of $S^1$-equivariant disk potential $F_{0,1}^{\OGW}$ of $(\CP,L)$
in \eqref{eqn:explicit-disk}, we have
\[
    F_{0,1}^{\OGW}(\bt;X) = [z^{-2}]\Big(I_\cS(\bq(\bt,X),z),-\sv\widetilde{\phi}_0\Big)_{\cS,T}\Big|_{\su_2=-\su_1=\sv} + \text{Exc}.
\]
\end{proof}

\begin{remark}
    Theorem \ref{thm:main-thm-i} has a similar philosophy as \cite[Theorem 6.6]{LY22}.
    The formal variable $X$, which records the winding number, contributes $q_1, q_2$ in $I_\cS(\bq)$. 
    Via the mirror map, $q_1, q_2$ are related to the variables $\tau_1,\tau_2$, which encodes the closed invariants associated with $H_1^T, H_2^T$.
    
    Considering the pullback morphism $\iota^*: H^2_T(\cS)\rightarrow H^2_{S^1}(\bP^1)$  (see Equation \eqref{eqn:pullback}),
    we see that $\tau_1+\tau_2$ corresponds to the coordinate $t^1$ on $H^2(\bP^1)$, and we expect that $(\tau_2-\tau_1)/2$ corresponds to the open-sector variable on $H^1(L)$. 
    This suggests identifying the formal variable $X$
    with the closed variables via $$q_1 = -\sqrt{q}X^{-1}, \quad q_2=-\sqrt{q}X.$$
    
    This identification is consistent with the spirit of the change of variables in \cite[Theorem 6.6, Proposition 6.14]{LY22}.
\end{remark}

\subsection{Formal expansion of the $I$-function}\label{sec:formal expansion}
In this subsection, we give another explanation on the right hand side of \eqref{eqn:main} via algebraic method.
We introduce the following notations:
\begin{equation*}
    \begin{aligned}
        \cR_0 &:= \bC\left[\frac{\sv}{\sv+\mu z},\frac{\sv}{\sv-\mu z}\right][\![z^{-1},q_1,q_2,\log q_0]\!],
        \\
        \cR_1 &:= \bC[\![z^{-1},\sv,q_1,q_2,\log q_0]\!],
        \\
        \cR_2 &:= \bC(\!(z^{-1})\!)[\![q_1,q_2,\log q_0, \sv^{-1}]\!].
    \end{aligned}
\end{equation*}
\noindent
Formally, the function $I^i(\bq;\sv,z)$ lies in the ring $\cR_0$.
Let $\xi_1:\cR_0\rightarrow \cR_1$ be the map such that
\begin{equation*}
    \begin{aligned}
        \xi_1\Big(\frac{\sv}{\sv+\mu z}\Big) &= \frac{\sv}{\mu z}\Big( 1- \frac{\sv}{\mu z}+ (\frac{\sv}{\mu z})^2 +\dots\Big) ,
        \\
        \xi_1\Big(\frac{\sv}{\sv-\mu z}\Big) &= \frac{\sv}{-\mu z}\Big(1 + \frac{\sv}{\mu z} + (\frac{\sv}{\mu z})^2 + \dots \Big).
    \end{aligned}
\end{equation*}
Let $\xi_2:\cR_0\rightarrow \cR_2$ be the map such that
\begin{equation*}
    \begin{aligned}
        \xi_2\Big(\frac{\sv}{\sv+\mu z}\Big) &= 1- \frac{\mu z}{\sv}+ (\frac{\mu z}{\sv})^2 +\dots,
        \\
        \xi_2\Big(\frac{\sv}{\sv-\mu z}\Big) &= 1 + \frac{\mu z}{\sv} + (\frac{\mu z}{\sv})^2 + \dots.
    \end{aligned}
\end{equation*}

In Theorem \ref{thm:genus-zero-mirror-theorem} and Theorem \ref{thm:main-thm-i}, the functions $I^i(\bq;\sv,z)\in\cR_0$ are the global B-model encoding the information of A-model generating functions.
Theorem \ref{thm:genus-zero-mirror-theorem} states that 
\[
    \xi_1\Big(\iota^*_{\si_0}I_{\cS}(\bq,z)\Big|_{\su_2=-\su_1=\sv}\Big) = e^{\frac{\tau_0(\bq)}{z}}J_{\cS}(\btau_2(\bq),z)\Big|_{\pZero,\su_2=-\su_1=\sv}.
\]  
Our main result (Theorem \ref{thm:main-thm-i}) states that 
\[
    F_{0,1}^{(\CP,L),S^1}(\bt;X) = [z^{-2}]\xi_2\Big((I_\cS(\bq,z),\su_1\tilde{\phi}_0)_{\cS,T}\Big|_{\su_2=-\su_1=\sv}\Big) + Exc.
\]

\appendix
\section{Bessel functions}\label{sec:Bessel}
The special function $I_\alpha(x)$ in $J$-function is the modified Bessel function of the first kind. It is defined as
\[
    I_\alpha(x) = \sum_{m=0}^{\infty} \frac{1}{m!\Gamma(m+\alpha+1)}(\frac{x}{2})^{2m+\alpha}.
\]
For $n\in\mathbb{N}$, $I_n(x) = I_{-n}(x)$.

\section{Asymptotics of $I$-function}\label{sec:asymptotic}
Let's analyse the asymptotic behaviour of $I$-function in details. 
We consider the series
\begin{equation*}
    \begin{aligned}
            I^2(\bq;\sv,z) =&\ e^{(\log q_0)/z}\sum_{d\geq 0}
            \sum_{\mu > 0} \frac{q_1^dq_2^{d+\mu}}{d!(d+\mu)!z^{2d+\mu}}
            \frac{(-1)^\mu\sv}{\sv-\mu z},
\\
    \varphi_k(\bq,z) :=& \ e^{(\log q_0)/z}\sum_{d\geq 0}
            \sum_{\mu > 0} \frac{q_1^dq_2^{d+\mu}(-1)^\mu}{d!(d+\mu)!z^{2d+\mu}}
            \mu^k z^k , \quad (k \in \bZ_{\geq 0}).
    \end{aligned}
\end{equation*}
\noindent
We will show the following statements:
\begin{itemize}
    \item [(a)] $I^2(\bq;\sv,z)$ is pointwisely well-defined for all $\bq,\sv,z$, where $\{\sv\neq \mu z: \mu \in \mathbb{Z}_{\geq 1}\}$ and $z\neq 0$.
    \item [(b)] Analyse the limit behaviour of $I^2(\bq;\sv,z)$ as $\sv\rightarrow \infty$.
    \item [(c)] $\varphi_k(\bq,z)$ is  well-defined pointwisely for all $\bq, z$, where $z\neq 0$.
    \item [(d)] $\{\varphi_k(\bq,z)\sv^{-k}\}_{k=0}^\infty$ is an asymptotic series of $I^2(\bq;\sv,z)$ pointwisely as $\sv\rightarrow \infty$ in the following sense.
\begin{prop}\label{prop:asym-limit}
        For every $\bq,z>0$, there exists an increasing sequence $\{\sv_l\}_{l=1}^\infty$ satisfying $\sv_l\rightarrow \infty$ as $l\rightarrow \infty$, such that 
        $\lim_{l\rightarrow\infty} I^2(\bq;\sv_l,z) $ is convergent, and
        \[
            \lim_{l\rightarrow\infty} \frac{I^2(\bq;\sv_l,z)-\sum_{k=0}^{N-1}\varphi_k(\bq,z)\sv_l^{-k}}{\varphi_{N}(\bq,z)\sv_l^{-N}}  = 1.
        \] 
\end{prop}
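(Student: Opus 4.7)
The plan is to make the remainder $R_N := I^2 - \sum_{k=0}^{N-1}\varphi_k \sv^{-k}$ explicit via a finite geometric identity, then choose an increasing sequence of $\sv$-values avoiding the poles $\sv = \mu z$ and bound the resulting error. Let $A_{d,\mu}(\bq,z)$ denote the common prefactor of the summands in $I^2$ and $\varphi_k$, so that $\varphi_k(\bq,z) = \sum_{d,\mu} A_{d,\mu}(\mu z)^k$. Applying the identity $\tfrac{1}{1-x}=\sum_{k=0}^{N-1}x^k + \tfrac{x^N}{1-x}$ with $x=\mu z/\sv$ to each summand of $I^2$ gives
\[
R_N(\bq;\sv,z) = \sv^{-N}\sum_{d\geq 0,\,\mu>0} A_{d,\mu}(\mu z)^N\cdot \frac{\sv}{\sv-\mu z}.
\]
Proving the proposition thus reduces to showing, for some increasing sequence $\sv_l\to\infty$,
\[
\frac{1}{\varphi_N(\bq,z)}\sum_{d,\mu}A_{d,\mu}(\mu z)^N\cdot\frac{\mu z}{\sv_l-\mu z}\longrightarrow 0
\]
as $l\to\infty$ (assuming $\varphi_N(\bq,z)\neq 0$, which holds for generic $\bq,z>0$).

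Next I would pick $\sv_l := (l+\tfrac{1}{2})z$. This uniformly separates $\sv_l$ from the poles: $|\sv_l-\mu z|\geq z/2$ for every $\mu\in\bZ_{>0}$, so $I^2(\bq;\sv_l,z)$ converges absolutely thanks to the factorial decay $1/(d!(d+\mu)!)$ in $A_{d,\mu}$. To control the residual sum, I would split the index $\mu$ into three ranges according to its distance from $l+\tfrac{1}{2}$: a far-below range $\mu\leq l/2$, a resonance range $l/2<\mu<2l$, and a far-above range $\mu\geq 2l$. In the far-below range one has $|\mu z/(\sv_l-\mu z)|\leq 2\mu/l$, yielding an $O(l^{-1})$ contribution from the absolutely convergent sum $\sum_{d,\mu}\mu\,|A_{d,\mu}(\mu z)^N|$. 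In the far-above range that factor is bounded by a constant, and the sum over $\mu\geq 2l$ of $|A_{d,\mu}(\mu z)^N|$ is a super-exponentially small tail. Both contributions tend to $0$ by standard dominated estimates.

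The main obstacle is the resonance range $l/2<\mu<2l$, where $|\mu z/(\sv_l-\mu z)|$ can be as large as order $l$. The saving grace is that $A_{d,\mu}$ carries the factor $1/(d+\mu)!$, which by Stirling decays like $(l/2e)^{-l/2}$ once $\mu\geq l/2$, easily beating any polynomial-in-$l$ growth coming from the bad factor and the $(\mu z)^N$ weight. Making this precise requires choosing the three ranges with enough slack on either side of the resonance; once done, summing the three contributions yields the desired $o(1)$ bound and the proposition follows. The convergence of $\lim_l I^2(\bq;\sv_l,z)$ asserted in the statement then follows by applying the case $N=1$, which gives $I^2(\bq;\sv_l,z)\to\varphi_0(\bq,z)$.
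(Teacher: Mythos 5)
Your argument is correct and matches the paper's proof in all essentials: the same geometric-series identity makes the remainder $I^2-\sum_{k=0}^{N-1}\varphi_k\sv^{-k}=\sv^{-N}\sum_{d,\mu}A_{d,\mu}(\mu z)^N\frac{\sv}{\sv-\mu z}$ explicit, and the same sequence $\sv_l=(l+\tfrac12)z$ keeps $|\sv_l-\mu z|\geq z/2$ uniformly away from the poles. The only difference is the convergence mechanism at the end: the paper invokes Lebesgue's dominated convergence theorem with the dominating function $\frac{q_1^dq_2^{d+\mu}}{d!(d+\mu)!z^{2d+\mu}}(\mu z)^N(2\mu+1)$, while you give an equivalent hands-on three-range estimate; your caveat that $\varphi_N(\bq,z)\neq 0$ is likewise implicit in the paper's formulation of the proposition.
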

    \item [(e)] View $\sv$ as a formal variable and show the $z^{-2}$-coefficient of the asymptotic series of $I^2(\bq;\sv,z)$ is well-defined. 
\end{itemize}

\

In step (a), fixing $\bq,\sv,z$, we have 
\begin{equation*}
    \lim_{\mu\rightarrow\infty} \Big|\frac{q_2^\mu}{\mu!z^\mu}\frac{\sv}{|\sv-\mu z|}\Big|^{1/\mu}=0,\quad
    \lim_{d\rightarrow \infty} \Big|\frac{q_1^dq_2^d}{d!z^{2d}}\Big|^{1/d}=0.
\end{equation*}
So the series $I^2(\bq;\sv,z)$ is absolutely convergent:
\begin{equation*}
    |I^2(\bq;\sv,z)| < e^{|\log q_0|/|z|}\sum_{d}\Big|\frac{q_1^dq_2^d}{d!z^{2d}}\Big|\sum_{\mu}\Big|\frac{q_2^\mu}{\mu !z^\mu}\frac{\sv}{|\sv-\mu z|}\Big| < \infty.
\end{equation*}

\

In step (b), we fix $\bq,z>0$.  
Notice that
$$I^2(\bq;\infty,z) := e^{(\log q_0)/z}\sum_{d,\mu} \frac{q_1^dq_2^{d+\mu}(-1)^\mu}{d!(d+\mu)!z^{2d+\mu}}$$ is absolutely convergent.
Let 
\begin{equation*}
    \begin{aligned}
        f_\sv(\bq,z;d,\mu):=& \ e^{(\log q_0)/z}\frac{q_1^dq_2^{d+\mu}}{d!(d+\mu)!z^{2d+\mu}}\frac{(-1)^\mu\sv}{\sv-\mu z},
        \\
        f_\infty(\bq,z;d,\mu) :=& \ e^{(\log q_0)/z}\frac{q_1^dq_2^{d+\mu}(-1)^\mu}{d!(d+\mu)!z^{2d+\mu}}.
    \end{aligned}
\end{equation*}
For fixed $\bq,z$, $f_\sv(\bq,z;d,\mu) \rightarrow f_\infty(\bq,z;d,\mu)$ for every $d,\mu$ pointwisely, as $\sv$ tends to infinity. 

We fix $z$ and then select a sequence $\{\sv_l\}_{l=1}^\infty \subset \bR_{>0}$ such that:
\begin{itemize}
    \item $\sv_l\rightarrow \infty \text{\ as \ }l\rightarrow +\infty$;
    \item There exists a linear function $s(\mu)$, such that $|\frac{\sv_l}{\sv_l - \mu z}| \leq s(\mu)$.
\end{itemize}
We can always find such $\sv_l$. For example, we assume $z>0$, if we choose $\sv_l = (l+1/2)z$, then 
\[
    \Big|\frac{\sv_l}{\sv_l - \mu z}\Big| = \Big|\frac{2l+1}{2l+1-2\mu}\Big| \leq 2\mu + 1.
\]
Then 
\[
    |f_{\sv_l}(\bq,z;d,\mu)| \leq g(\bq,z;d,\mu), \ \forall \ l\in\mathbb{Z}_{\geq 1},
\]
where 
\[
    g(\bq,z;d,\mu) := e^{|\log q_0|/z}\frac{q_1^dq_2^{d+\mu}}{d!(d+\mu)!z^{2d+\mu}}(2\mu + 1).
\]
The function $\sum_{d,\mu}g(\bq,z;d,\mu) < \infty$, so by Lebesgue's dominated convergence theorem, we get
\[
    I^2(\bq;\infty, z) = \sum_{d,\mu} f_\infty(\bq,z;d,\mu) = \sum_{d,\mu}\lim_{l\rightarrow \infty}f_{\sv_l}(\bq,z;d,\mu) =
    \lim_{l\rightarrow \infty} I^2(\bq;\sv_l,z).
\]
    
\

In step (c), we fix $\bq,z$, where $z\neq 0$.
Let 
\[
    a_{d,\mu}^k := \frac{(-q_2)^\mu\mu^kz^k}{(d+\mu)!z^\mu}.
\]
We first fix $d$ and $k$, and show $\sum_{\mu\geq 1}a_{d,\mu}^k$ is absolutely convergent. We have
\begin{equation*}
    \begin{aligned}
        \frac{|a_{d,\mu+1}^k|}{|a_{d,\mu}^k|} &= \frac{|q_2|^{\mu+1}(\mu+1)^k}{(d+\mu+1)!|z|^{\mu+1}}\cdot 
        \frac{(d+\mu)!|z|^\mu}{|q_2|^\mu\mu^k}
        \\
        &= \left|\frac{q_2}{z}\right|\frac{(1+1/\mu)^k}{d+\mu+1} \rightarrow 0 \text{ as } \mu\rightarrow 0.
    \end{aligned}
\end{equation*}
Therefore, there is a series of well-defined functions $\{A_d^k(q_2,z)\}_{d,k\geq 0}$ such that
\begin{equation*}
    \begin{aligned}
        &\sum_{\mu\geq 1} |a_{d,\mu}^k(q_2,z)| = A_d^k(q_2,z) < \infty,
        \\
        &|\varphi_k(\bq,z)| \leq e^{|(\log q_0)/z|}\sum_{d\geq 0} \Big|\frac{q_1^dq_2^d}{d!z^{2d}}\Big|A_d^k(q_2,z)
        \\
        & \quad \quad \quad \quad \leq e^{|(\log q_0)/z|}A_0^k(q_2,z)\sum_{d\geq 0} \Big|\frac{q_1^dq_2^d}{d!z^{2d}}\Big|.
    \end{aligned}
\end{equation*}
Let
\begin{equation*}
    \begin{aligned}b_d := \frac{q_1^dq_2^d}{d!z^{2d}},\quad
        \sqrt[d]{|b_d|} = \frac{1}{\sqrt[d]{d!}}\left|\frac{q_1q_2}{z^2}\right|\rightarrow 0 \text{\ as \ }d\rightarrow +\infty.
    \end{aligned}
\end{equation*}
Then we know $\varphi_k(\bq,z)$ is well-defined for all $\bq,z$. Furthermore, for fixed $\bq,z$ and for every $k$, we have
\[
    \varphi_{k+1}(\bq,z)\sv^{-k-1} = o(\varphi_k(\bq,z)\sv^{-k}) \ \text{ as  } \sv \rightarrow \infty.
\]
Hence, the series $\{\varphi_k(\bq,z)\sv^{-k}\}_{k=0}^\infty$ constitutes an asymptotic scale.

~

In step (d), assume $\bq,z>0$, we need to estimate the limit in Proposition \ref{prop:asym-limit}. Let
\begin{equation*}
    \begin{aligned}
            h_\sv(\bq,z;d,\mu) :=& \ \frac{q_1^dq_2^{d+\mu}(-1)^\mu}{d!(d+\mu)!z^{2d+\mu}}\sv^N\Big(\frac{\sv}{\sv-\mu z} - \sum_{k=0}^{N-1}\frac{\mu^kz^k}{\sv^k}\Big)
            \\
            =& \ \frac{q_1^dq_2^{d+\mu}(-1)^\mu}{d!(d+\mu)!z^{2d+\mu}}\frac{\sv}{\sv-\mu z}\mu^Nz^N,
            \\
            h_\infty(\bq,z;d,\mu) :=& \ \frac{q_1^dq_2^{d+\mu}(-1)^\mu}{d!(d+\mu)!z^{2d+\mu}}\mu^Nz^N.
    \end{aligned}
\end{equation*}    
Observe that $h_\sv(\bq,z;d,\mu)$ converges to $h_\infty(\bq,z;d,\mu)$  pointwisely, as $\sv$ tends to infinity.

Fix $z$ and let $\sv_l := (l+1/2)z$. We have
\begin{equation*}
    \begin{aligned}
        & \ \ \ \ \ |h_{\sv_l}(\bq,z;d,\mu)| = \frac{q_1^dq_2^{d+\mu}}{d!(d+\mu)!z^{2d+\mu}}\sv_l^N\Big|\frac{\sv_l}{\sv_l-\mu z} - \sum_{k=0}^{N-1}\frac{\mu^kz^k}{\sv_l^k}\Big|
        \\
        &= \frac{q_1^dq_2^{d+\mu}}{d!(d+\mu)!z^{2d+\mu}}\Big|\frac{\sv_l(\mu z)^N}{\sv_l - \mu z}\Big| \leq \frac{q_1^dq_2^{d+\mu}}{d!(d+\mu)!z^{2d+\mu}}(\mu z)^N(2\mu + 1).
    \end{aligned}
\end{equation*}

Notice that for every fixed $z$, the function 
\[
    e^{(\log q_0)/z} \sum_{d,\mu} \frac{q_1^dq_2^{d+\mu}}{d!(d+\mu)!z^{2d+\mu}}(\mu z)^N(2\mu + 1) < \infty.
\]
By Lebesgue's dominated convergence theorem, we have
\[
    \lim_{l\rightarrow \infty}  \sv_l^N\Big(I^2(\bq;\sv_l,z)-\sum_{k=0}^{N-1}\varphi_k(\bq,z)\sv_l^{-k}\Big) = e^{(\log q_0)/z}\sum_{d,\mu}h_\infty(\bq,z;d,\mu) =\varphi_N(\bq,z),
\]
i.e.
\[
    \lim_{l\rightarrow \infty} \frac{I^2(\bq;\sv_l,z)-\sum_{k=0}^{N-1}\varphi_k(\bq,z)\sv_l^{-k}}{\varphi_N(\bq,z)\sv_l^{-N}} = 1.
\]
Hence, $\{\varphi_k(\bq,z)\sv^{-k}\}_{k=0}^\infty$ is an asymptotic series of $I^2(\bq;\sv,z)$ for every fixed $\bq,z$ and well-chosen $\sv_l\rightarrow \infty$.

~

In step (e), we will show the $z^{-2}$-coefficient of the asymptotic series is well-defined.
In other words, we will show that $z^{-2}$-coefficient of $\varphi_k(\bq,z)$ is well-defined for all $k\in\bZ_{\geq 0}$.

We expand $\varphi_k(\bq,z)$ as formal series of $z$:
\begin{equation*}
    \begin{aligned}
        \varphi_k(\bq,z) &= \sum_{l\geq 0} \frac{(\log q_0)^l}{l!z^l}\sum_{d\geq 0, \mu >0}\frac{q_1^dq_2^{d+\mu}(-1)^\mu}{d!(d+\mu)!z^{2d+\mu}}\mu^kz^k,
        \\
        [z^{-m}]\varphi_k(\bq,z) &= \sum_{l+2d+\mu=k+m\atop {l,d\geq 0,\ \mu \geq 1}}  \frac{(\log q_0)^l}{l!}\frac{q_1^dq_2^{d+\mu}(-1)^\mu}{d!(d+\mu)!}\mu^k, \quad (m\in\bZ_{\geq 0}).
    \end{aligned}
\end{equation*}
Notice that $[z^{-m}]\varphi_k(\bq,z)$ is a finite sum, so it is well-defined. 

~

The same argument can be applied to $I^1(\bq;\sv,z)$.

\end{document}